\newtheorem{theorem}{Theorem}
\newtheorem{lemma}[theorem]{Lemma}
\newtheorem{proposition}[theorem]{Proposition}
\newtheorem{corollary}[theorem]{Corollary}
\newtheorem{question}[theorem]{Question}
\theoremstyle{definition}
\newtheorem{definition}[theorem]{Definition}
\theoremstyle{remark}
\newtheorem{remark}[theorem]{Remark}
\newcommand{\uu}{\underline{u}}
\newcommand{\vv}{\underline{v}}
\newcommand{\w}{\underline{w}}
\title{One-ended subgroups of graphs of free groups with cyclic edge groups}
\author{Henry Wilton}
\begin{document}

\maketitle

\begin{abstract}
Consider a one-ended word-hyperbolic group.  If it is the fundamental group of a graph of free groups with cyclic edge groups then either it is the fundamental group of a surface or it contains a finitely generated one-ended subgroup of infinite index.  As a corollary, the same holds for limit groups.  We also obtain a characterisation of surfaces with boundary among free groups equipped with peripheral structures.
\end{abstract}

Free subgroups of hyperbolic groups are abundant, and many successful techniques have been developed to find them.  It is necessarily much harder to find one-ended subgroups: any one-ended group has at most finitely many conjugacy classes of embeddings into a fixed hyperbolic group \cite[5.3.C']{gromov_hyperbolic_1987}.   One case of longstanding interest in topology is the problem of finding a \emph{surface subgroup}, by which we mean a subgroup isomorphic to the fundamental group of a closed surface of non-positive curvature.  The following famous question is often attributed to Gromov \cite{bestvina_questions_????,bridson_problems_????}.

\begin{question}\label{q: Surface subgroups}
Does every one-ended hyperbolic group have a surface subgroup?
\end{question}

The closest thing to this question that we were able to find in the literature is as follows \cite[page 144]{gromov_hyperbolic_1987}.

\begin{quote}
`[O]ne may suspect that there exist word hyperbolic groups $\Gamma$ with arbitrarily large $\mathrm{dim}~\partial\Gamma$ (here, large is $\geq 1$) where every proper subgroup is free.'
\end{quote}
Setting aside the question of whether every hyperbolic group has a proper subgroup of finite index, which is a notoriously difficult question in its own right, this raises the following natural counterpart to Question \ref{q: Surface subgroups}.

\begin{question}\label{q: One-ended subgroups}
Does every one-ended hyperbolic group that is not virtually a surface group contain a finitely generated one-ended subgroup of infinite index?
\end{question}

Although Question \ref{q: One-ended subgroups} seems substantially weaker than Question \ref{q: Surface subgroups}, in the motivating case of the fundamental group of a closed hyperbolic 3-manifold (recently resolved in the affirmative by Kahn and Markovic \cite{kahn_immersing_2009}), the two questions are equivalent.  Indeed, since 3-manifold groups are coherent \cite{scott_finitely_1973}, any finitely generated, infinite-index subgroup $H$ of a 3-manifold group is the fundamental group of a compact 3-manifold $N$ with non-empty boundary.  If $N$ is aspherical then $\partial N$ has non-positive Euler characteristic.  If, in addition, $H$ is one-ended then Dehn's Lemma implies that $\partial N$ is incompressible, and so $H$, and hence $\pi_1N$, has a surface subgroup.

Despite the recent result of Kahn and Markovic, little is known about certain seemingly very simple classes of hyperbolic groups.  For instance, let $\Gamma$ be the fundamental group of a graph of free groups with infinite cyclic edge groups.  Calegari proved that $\Gamma$ has a surface subgroup if $H_2(\Gamma;\mathbb{Q})$ is non-trivial \cite{calegari_surface_2008}. Further sufficient conditions were found in \cite{gordon_surface_2010} and \cite{kim_polygonal_2009}.  Kim and Oum answered Question \ref{q: Surface subgroups} when $\Gamma$ is the double of a rank-two free group \cite{kim_hyperbolic_2010}.

By the Combination Theorem \cite{bestvina_combination_1992}, $\Gamma$ is hyperbolic if and only if it does not contain a Baumslag--Solitar subgroup, and the existence of such a subgroup can be verified from a given graph-of-groups decomposition.  One-endedness can also be easily characterised in this case: $\Gamma$ is one-ended if and only if every vertex group of the graph of groups is freely indecomposable relative to its edge groups; see Theorem \ref{t: Variant of Shenitzer's Theorem} below for details.

The main theorem of this paper resolves Question \ref{q: One-ended subgroups} for such $\Gamma$.

\begin{theorem}\label{t: Main theorem}
Let $\Gamma$ be the fundamental group of a graph of free groups with cyclic edge groups. If $\Gamma$ is hyperbolic and one-ended then either $\Gamma$ is the fundamental group of a closed surface or $\Gamma$ has a finitely generated subgroup of infinite index that is one-ended.
\end{theorem}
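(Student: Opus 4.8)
The plan is to pass to a graph-of-spaces model, reduce the statement to a property of the individual vertex groups equipped with their peripheral structures, and then invoke a characterisation of which such pairs come from surfaces with boundary. First, some reductions: if some edge group of the given graph-of-groups decomposition were trivial, then collapsing the remaining edges would exhibit $\Gamma$ as a nontrivial free product or as a free product with $\Z$, contradicting one-endedness, so every edge group is infinite cyclic. Take a graph of spaces $X$ with $\pi_1 X\cong\Gamma$ in which each vertex space $X_v$ is a finite graph with $\pi_1 X_v\cong F_v$, each edge space is a cylinder, and the two attaching maps of each cylinder are immersed loops realising the two edge-group inclusions; write $\mathcal P_v$ for the resulting peripheral structure on $F_v$, the finite family of cyclic subgroups that are images of the edge groups incident to $v$. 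By Theorem~\ref{t: Variant of Shenitzer's Theorem}, one-endedness of $\Gamma$ says exactly that each pair $(F_v,\mathcal P_v)$ is freely indecomposable relative to $\mathcal P_v$, and hyperbolicity forbids the identifications of attaching loops that would create a Baumslag--Solitar subgroup.

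Call $(F,\mathcal P)$ a \emph{surface pair} if $F\cong\pi_1\Sigma$ for a compact surface $\Sigma$ with nonempty boundary whose family of boundary subgroups is exactly $\mathcal P$. The key new ingredient is a combinatorial characterisation of this property: roughly, that after folding, the relevant Whitehead graph at every vertex is a disjoint union of cycles --- equivalently, given that $(F,\mathcal P)$ is freely indecomposable relative to $\mathcal P$, that no vertex of any of these Whitehead graphs has valence at least three. Granting this, if every $(F_v,\mathcal P_v)$ is a surface pair then, because $\mathcal P_v$ is precisely the set of incident edge groups and these are maximal cyclic boundary subgroups, the surfaces $\Sigma_v$ glue along their boundary circles with nothing left over, forming a \emph{closed} surface $\Sigma$ with $\pi_1\Sigma\cong\Gamma$; one-endedness and hyperbolicity force $\chi(\Sigma)<0$, so we are in the first alternative of the theorem (orientability of $\Sigma$ is immaterial).

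Suppose instead that some $(F_v,\mathcal P_v)$ fails to be a surface pair. Then I would use the characterisation to produce a finite-index subgroup $\hat F_v\le F_v$ together with an \emph{infinite-index}, finitely generated subgroup $H_v\le\hat F_v$ that meets each peripheral subgroup of $\hat F_v$ in a finite-index subgroup and for which the induced pair $(H_v,\mathcal P|_{H_v})$ is again freely indecomposable relative to its peripheral structure; this is where the ``non-surface'' room gets spent. Passing to the corresponding finite cover $\hat X\to X$, and hence to a finite-index subgroup $\hat\Gamma\le\Gamma$ which is again a graph of free groups with cyclic edge groups, form inside $\hat X$ the sub-graph-of-groups $\mathcal H$ with the same underlying graph, the group $H_v$ at one chosen lift of $v$, the full vertex groups elsewhere, and at the edges meeting that lift the appropriate finite-index subgroups of the old edge groups. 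Set $H:=\pi_1\mathcal H\le\hat\Gamma\le\Gamma$. Then $H$ is finitely generated; it has infinite index since $[\Gamma:H]\ge[F_v:H\cap F_v]=[F_v:H_v]=\infty$; and it is one-ended by Theorem~\ref{t: Variant of Shenitzer's Theorem} applied to $\mathcal H$, because replacing peripheral subgroups by finite-index subgroups does not destroy free indecomposability relative to the peripheral structure at the unchanged vertices, and that property holds at the modified vertex by construction.

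The main obstacle is the construction of $H_v$ with all three properties at once: shrinking $\hat F_v$ far enough to force infinite index in $\Gamma$ tends to make $\mathcal H$ split as a free product, destroying one-endedness, whereas keeping enough of $\hat F_v$ to remain one-ended threatens to leave $H_v$ of finite index. Balancing this is precisely the role of the surface-pair characterisation: at a non-surface vertex it isolates a definite surplus --- concretely, the excess valence at some vertex of the Whitehead graph --- that can be kept while a positive amount is discarded elsewhere, whereas a surface pair admits no such proper, one-ended sub-configuration, which is what pins down the surface case. Proving the characterisation itself --- identifying the correct complexity of a free group with peripheral structure and showing it is minimised exactly by surfaces --- is the other substantial step, carried out by Stallings-style folding and core-graph analysis of the pairs $(F,\mathcal P)$.
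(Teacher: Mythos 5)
There are two genuine gaps here, and together they mean the proposal does not go through.

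First, the ``all surface pairs'' case does not yield a closed surface. Even when every $(F_v,\mathcal P_v)$ is a surface pair, the edge groups may include into the boundary subgroups as proper powers (the peripheral structure records only the \emph{maximal} cyclic subgroup containing an edge group), and several edges may attach to the same boundary component (e.g.\ three surfaces glued along a single common boundary circle). Such $\Gamma$ are one-ended and hyperbolic but are not closed surface groups, so your dichotomy ``closed surface or some vertex is not a surface pair'' is false, and your argument produces nothing in these cases. The paper handles exactly this situation (Lemma \ref{l: No rigid vertices}) by first passing to a finite-index subgroup, using \cite[Theorem 4.18]{wise_subgroup_2000}, in which every attaching map hits a full boundary component, then thickening to a compact aspherical $3$-manifold $M$ and taking $\pi_1(\partial M)$ as the one-ended subgroup; the conclusion is only that $\Gamma$ has a \emph{surface subgroup}, not that $\Gamma$ is a surface group.

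Second, and more seriously, the entire weight of the proof rests on the unproved assertion that a non-surface vertex admits a finitely generated, infinite-index subgroup $H_v\le \hat F_v$ meeting each peripheral subgroup in finite index with $(H_v,\mathcal P|_{H_v})$ still freely indecomposable. This is essentially Corollary \ref{c: Minimal commensurability classes}, which the paper \emph{deduces from} the Main Theorem via doubles; using it as an input is close to circular, and the Whitehead-graph characterisation you gesture at does not produce such an $H_v$. Moreover the object $\mathcal H$ you build from $H_v$ is not a legitimate subgroup: if the edge groups incident at the modified vertex are replaced by proper finite-index subgroups on one side while the adjacent vertex groups and edge groups are kept whole, the edge inclusions no longer match and the resulting graph of groups has no canonical $\pi_1$-injective map to $\hat X$ --- this mismatch is precisely what the paper's machinery of elevations and pre-covers (Proposition \ref{p: Gluing precovers}) is designed to control. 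The paper's actual route is structurally different: it passes to the JSJ decomposition so that each non-surface vertex is \emph{rigid} (not merely freely indecomposable rel $\mathcal P_v$); rigidity is what makes the minimal Whitehead graph free of cut vertices and separating edge pairs, so that after splicing over a clean finite cover one may delete one whole peripheral element and stay freely indecomposable (Theorem \ref{t: Local theorem}). The infinite index is then created not inside a vertex group but at the graph-of-groups level, by assembling a pre-cover with hanging elevations, which has infinite index because the JSJ is reduced. Without the rigid/surface dichotomy supplied by the JSJ, deleting or shrinking peripheral elements at a merely freely indecomposable vertex can create a free splitting, which is exactly the failure mode you acknowledge but do not resolve.
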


It is obvious that one can reduce to the case in which the given splitting of $\Gamma$ has only one edge---that is, to the case in which $\Gamma$ is an amalgamated free product or HNN extension of free groups.  However, this observation does not seem to be particularly useful.  In fact, the proof uses a cyclic splitting of $\Gamma$ that may be finer, namely the JSJ decomposition.

Of course, graphs of free groups with cyclic edge groups are not representative of hyperbolic groups in general, so Theorem \ref{t: Main theorem} falls far short of resolving Question \ref{q: One-ended subgroups}, but it does place heavy restrictions on the nature of any positive example.  If $\Gamma$ is any hyperbolic group that splits over a virtually cyclic subgroup then Question \ref{q: One-ended subgroups} is trivial for $\Gamma$ unless $\Gamma$ is the fundamental group of a graph of virtually free groups with virtually cyclic edge groups.  Such groups are virtually torsion-free \cite[Theorem 4.19]{wise_subgroup_2000}, and so the main theorem has the following corollary.

\begin{corollary}\label{c: Grandiose corollary}
Suppose $\Gamma$ is a one-ended hyperbolic group that splits non-trivially over a virtually cyclic subgroup.  Then either $\Gamma$ has a finitely generated one-ended subgroup of infinite index or $\Gamma$ is virtually the fundamental group of a closed surface.
\end{corollary}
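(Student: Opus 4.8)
The plan is to deduce the corollary from Theorem~\ref{t: Main theorem} together with the virtual torsion-freeness of graphs of virtually free groups with virtually cyclic edge groups. Suppose $\Gamma$ is one-ended and hyperbolic and splits non-trivially over a virtually cyclic subgroup $C$, so $\Gamma = A\ast_C B$ or $\Gamma = A\ast_C$. Since $\Gamma$ is one-ended it does not split over a finite subgroup, so $C$ is infinite, hence two-ended; as two-ended subgroups of a hyperbolic group are quasi-convex, the splitting may be taken with the vertex group(s) quasi-convex, hence finitely generated and hyperbolic. Finally, since $\Gamma$ is not virtually cyclic its Bass--Serre tree is infinite, and a standard argument shows each vertex group of this splitting has infinite index in $\Gamma$.

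Now I run a dichotomy on the vertex group(s). If some vertex group $V$ is one-ended, then $V$ is a finitely generated one-ended subgroup of $\Gamma$ of infinite index and we are done. If $V$ has more than one end but is not virtually free, then by Stallings' theorem and accessibility of the finitely generated hyperbolic group $V$, a terminal splitting of $V$ over finite subgroups has a one-ended vertex group $V_0$, which is quasi-convex in $V$ and hence a finitely generated one-ended subgroup of $\Gamma$ of infinite index, and we are again done. In the remaining case every vertex group of the splitting is virtually free, so $\Gamma$ is the fundamental group of a graph of virtually free groups with virtually cyclic edge groups.

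In that case \cite[Theorem 4.19]{wise_subgroup_2000} shows $\Gamma$ is virtually torsion-free; let $\Gamma'\leqslant\Gamma$ be a torsion-free subgroup of finite index, so $\Gamma'$ is torsion-free, hyperbolic and one-ended. The quotient of the Bass--Serre tree $T$ by $\Gamma'$ is a finite graph (because $[\Gamma:\Gamma']<\infty$) underlying a graph-of-groups decomposition of $\Gamma'$ whose vertex groups are intersections of $\Gamma'$ with conjugates of $A$ and $B$---torsion-free and virtually free, hence free---and whose edge groups are intersections of $\Gamma'$ with conjugates of $C$---torsion-free and virtually cyclic, hence infinite cyclic or trivial. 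Collapsing the edges with trivial edge group and reducing leaves a graph of groups with at least one edge (otherwise $\Gamma'$ would be free or a non-trivial free product, contradicting one-endedness), all of whose edge groups are infinite cyclic and all of whose vertex groups are still free. Thus $\Gamma'$ is a one-ended hyperbolic group which is the fundamental group of a graph of free groups with cyclic edge groups.

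Applying Theorem~\ref{t: Main theorem} to $\Gamma'$ gives two cases. Either $\Gamma'$ is the fundamental group of a closed surface, whence $\Gamma$ is virtually the fundamental group of a closed surface; or $\Gamma'$ has a finitely generated one-ended subgroup $H$ of infinite index, and then $H\leqslant\Gamma'\leqslant\Gamma$ is a finitely generated one-ended subgroup of $\Gamma$ with $[\Gamma:H]=[\Gamma:\Gamma']\,[\Gamma':H]=\infty$. In either case the corollary follows. I expect that no new idea is required; the points needing care are ensuring the vertex groups of the splitting are finitely generated, and---the main one---checking that the torsion-free finite-index subgroup $\Gamma'$ again carries a splitting of precisely the form (free vertex groups, cyclic edge groups, at least one edge) required by Theorem~\ref{t: Main theorem}.
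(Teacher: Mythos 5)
Your argument is correct and follows exactly the route the paper intends: reduce to the case where every vertex group is virtually free (otherwise a finitely generated one-ended subgroup of infinite index is found directly via Stallings--Dunwoody inside a quasi-convex vertex group), invoke Wise's virtual torsion-freeness, pass to a torsion-free finite-index subgroup that inherits a structure of graph of free groups with cyclic edge groups, and apply Theorem~\ref{t: Main theorem}. The paper gives only a two-sentence sketch of this before stating the corollary, and your write-up correctly supplies the details it omits (quasi-convexity and finite generation of the vertex groups, freeness of torsion-free virtually free groups, and the fact that the induced edge groups of the finite-index subgroup are infinite cyclic).
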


Limit groups, otherwise known as finitely generated fully residually free groups, play a central role in the study of algebraic geometry and logic over free groups \cite{sela_diophantine_2001,kharlampovich_irreducible_1998}.  They are not necessarily hyperbolic, but every limit group is a toral relatively hyperbolic group \cite{alibegovic_combination_2005,dahmani_combination_2003}.

\begin{corollary}\label{c: Limit groups}
If $\Gamma$ is a one-ended limit group  then either $\Gamma$ has a finitely generated one-ended subgroup of infinite index or $\Gamma$ is the fundamental group of a closed surface.
\end{corollary}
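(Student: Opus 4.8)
The plan is to split into two cases according to whether $\Gamma$ contains a subgroup isomorphic to $\Z^2$. Recall, as noted above, that limit groups are toral relatively hyperbolic; in particular $\Gamma$ is word-hyperbolic precisely when it contains no copy of $\Z^2$.

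Suppose first that $\Gamma$ contains a copy of $\Z^2$. If $\Gamma$ is abelian then $\Gamma\cong\Z^k$ with $k\geq 2$; for $k=2$ this is $\pi_1(T^2)$, the fundamental group of a closed surface, while for $k\geq 3$ the inclusion $\Z^2\into\Z^k$ provides a finitely generated, one-ended subgroup of infinite index, so in either case the conclusion holds. If $\Gamma$ is not abelian, let $A$ be a maximal abelian subgroup containing a copy of $\Z^2$; since limit groups are CSA, $A$ is finitely generated, free abelian of rank at least $2$, and malnormal, so $A$ is itself a finitely generated, one-ended subgroup. If $[\Gamma:A]$ were finite then, for any $g\in\Gamma$, the subgroup $gAg^{-1}\cap A$ would have finite index in $A$, hence be non-trivial, forcing $g\in A$ by malnormality and contradicting the non-abelianity of $\Gamma$. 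Hence $[\Gamma:A]=\infty$ and $A$ is the required subgroup.

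Now suppose $\Gamma$ is word-hyperbolic. Being one-ended, $\Gamma$ is freely indecomposable and is not infinite cyclic, so by the structure theory of limit groups it admits a non-trivial splitting over an abelian subgroup; since $\Gamma$ contains no $\Z^2$, that subgroup is infinite cyclic. Thus Corollary \ref{c: Grandiose corollary} applies, and either $\Gamma$ has a finitely generated one-ended subgroup of infinite index --- in which case we are done --- or $\Gamma$ is virtually the fundamental group of a closed surface. In the latter case $\Gamma$ is torsion-free, being a limit group, and a torsion-free group that is virtually the fundamental group of a closed surface is itself the fundamental group of a closed surface; this completes the proof.

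The step I expect to carry the real weight is the input used in the hyperbolic case, namely that a one-ended hyperbolic limit group splits over an infinite cyclic subgroup. This draws on Sela's and Kharlampovich--Myasnikov's description of limit groups as iterated amalgams of free, free-abelian and surface groups along abelian edge groups: a freely indecomposable limit group is either infinite cyclic or inherits a non-trivial abelian splitting from that hierarchy, and in the absence of $\Z^2$ such a splitting is cyclic. The remaining points --- the handling of the degenerate abelian cases, and the passage from ``virtually a surface group'' to ``a surface group'' using torsion-freeness --- are routine.
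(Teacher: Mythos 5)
Your proof is correct and follows essentially the same route as the paper: reduce to the word-hyperbolic case using Sela's $\mathbb{Z}^2$ criterion together with the fact that the relevant abelian subgroups have infinite index, then invoke the existence of a cyclic splitting of a non-abelian limit group and apply Corollary \ref{c: Grandiose corollary}. You merely make explicit two steps the paper leaves implicit, namely the detailed treatment of the non-hyperbolic case via malnormality of maximal abelian subgroups and the passage from ``virtually a closed surface group'' to ``a closed surface group'' using torsion-freeness.
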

\begin{proof}
A limit group is hyperbolic if and only if it does not have a subgroup isomorphic to $\mathbb{Z}^2$ \cite[Corollary 4.4]{sela_diophantine_2001}, and any virtually abelian limit group is abelian, so we may reduce to the hyperbolic case.  Every non-abelian limit group splits over a cyclic subgroup \cite[Theorem 3.9]{sela_diophantine_2001}, so the result follows from Corollary \ref{c: Grandiose corollary}.
\end{proof}

Further motivation for Theorem \ref{t: Main theorem} is provided by the class of \emph{special} groups, introduced by Haglund and Wise \cite{haglund_special_2008}, of which graphs of free groups with cyclic edge groups are examples \cite{tim_hsu_cubulating_2010}.  It should be possible to generalise Theorem \ref{t: Main theorem} to the class of special groups; the main technical obstruction is the absence of a suitable JSJ decomposition.

The ingredients of the proof of Theorem \ref{t: Main theorem} include Bowditch's JSJ decomposition for hyperbolic groups and a criterion for detecting free splittings of graphs of groups with cyclic edge groups in terms of their vertex groups (Theorem \ref{t: Variant of Shenitzer's Theorem}).  The heart of the proof is a `Local Theorem' about the rigid vertices of the JSJ decomposition (Theorem \ref{t: Local theorem} below).  To state the Local Theorem, we need to introduce peripheral structures on free groups.

A \emph{multiword} in $F$ is a subset $\w\subseteq F\smallsetminus 1$.  A set of pairwise non-conjugate maximal cyclic subgroups of $F$ is called a \emph{peripheral structure} on $F$.   Any multiword $\w$ defines a peripheral structure $[\w]$.  We will consider pairs $(F,[\w])$, where $F$ is a free group and $[\w]$ is a peripheral structure.  Note that any vertex group of a graph of groups with cyclic edge groups is naturally equipped with a peripheral structure induced by the incident edges.

A peripheral structure $[\w]$ on $F$ induces a natural pullback peripheral structure $[\hat{\w}]$ on any subgroup $\widehat{F}\subseteq F$.  We give a topological definition.

\begin{definition}
Realise $F$ as the fundamental group of a handlebody $X$, and $\w$ as an embedded 1-dimensional submanifold $N\subseteq X$.  If $\widehat{F}\subseteq F$ is a subgroup, let $\widehat{X}$ be a corresponding covering space of $X$ and let $\widehat{N}$ be the preimage of $N$ in $\widehat{X}$.  The pullback of $[\w]$ to $\widehat{F}$, which we denote by $[\hat{\w}]$, is determined by those non-trivial conjugacy classes of $\widehat{F}$ that are determined by components of $\widehat{N}$.  
\end{definition}

\begin{remark}
If $\w$ is finite and $\widehat{F}$ is finitely generated then $\hat{\w}$ is finite .
\end{remark}

A pair $(F,[\w])$ is said to be \emph{freely indecomposable} or \emph{one-ended} if the elements of $[\w]$ are elliptic in every free splitting of $F$, and \emph{rigid} if they are elliptic in every cyclic splitting of $F$.  A pair $(F,[\w])$ is said to be a \emph{surface} if there is an isomorphism $F\cong\pi_1\Sigma$ for $\Sigma$ a compact surface that identifies $[\w]$ with the conjugacy classes of cyclic subgroups corresponding to $\partial\Sigma$. An important special case is when $\Sigma$ is a \emph{thrice-punctured sphere}; this is the unique case in which $(F,[\w])$ is both a surface and rigid.

We can now state the Local Theorem.

\begin{theorem}\label{t: Local theorem}
Suppose $(F,[\w])$ is rigid and not a thrice-punctured sphere. For any clean finite-index subgroup $\widehat{F}\subseteq F$, for any $\hat{w}_i\in\hat{\w}$, the pair $(\widehat{F},[\hat{\w}\smallsetminus \{\hat{w}_i\}])$ is freely indecomposable.
\end{theorem}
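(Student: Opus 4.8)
Suppose the conclusion fails. Then for some clean finite-index subgroup $\widehat{F}\leq F$ and some $\hat{w}_i\in\hat{\w}$ the pair $(\widehat{F},[\hat{\w}\smallsetminus\{\hat{w}_i\}])$ is freely decomposable: $\widehat{F}$ acts minimally and nontrivially on a tree $\widehat{S}$ with trivial edge stabilisers in which every $\hat{w}_l$ with $l\neq i$ is elliptic. A rigid pair is in particular freely indecomposable, and free indecomposability of a pair is inherited by finite-index subgroups carrying the pullback peripheral structure; hence $(\widehat{F},[\hat{\w}])$ is freely indecomposable, and so $\hat{w}_i$ must act hyperbolically on $\widehat{S}$. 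The theorem therefore reduces to the assertion that no free splitting of a clean finite cover can turn exactly one marked curve hyperbolic when the base pair is rigid and is not a thrice-punctured sphere.

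I would prove this by descending $\widehat{S}$ through the cover and then invoking rigidity. In the handlebody model of the definition---$X$ a handlebody with $\pi_1X=F$ carrying the embedded multicurve $N$, and $p\colon\widehat{X}\to X$ the finite cover with $\widehat{N}=p^{-1}(N)$, so that by cleanness each component of $\widehat{N}$ maps homeomorphically onto a component of $N$---the splitting $\widehat{S}$ is carried by a compressing disc system in $\widehat{X}$ that is disjoint from $\widehat{N}\smallsetminus\widehat{N}_i$ and meets $\widehat{N}_i$ essentially. Projecting by $p$ and putting the result into minimal position with respect to $N$ (a Dunwoody--Stallings resolution, with innermost-disc exchanges) should yield an honest compressing disc system $D$ in $X$, disjoint from $N\smallsetminus N_{c(i)}$, where $N_{c(i)}=p(\widehat{N}_i)$; it is exactly cleanness together with the removal of a \emph{single} elevation that keeps the projected system clear of every component of $N$ but one. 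If $D$ can be isotoped off $N_{c(i)}$ as well, then $(F,[\w])$ is freely decomposable, contradicting rigidity; so $D$ meets $N_{c(i)}$ essentially, i.e.\ $(F,[\w\smallsetminus\{w_{c(i)}\}])$ is freely decomposable with $w_{c(i)}$ hyperbolic. I would then fold in $w_{c(i)}$: the axis of $w_{c(i)}$ in the resulting $F$-tree carries a splitting of $F$ over a subgroup of $\langle w_{c(i)}\rangle$, hence over a cyclic group, in which the remaining $w_l$ stay elliptic and $w_{c(i)}$ becomes elliptic, again contradicting rigidity---\emph{unless} the fold is degenerate, which happens precisely when $(F,[\w])$ is realised as a compact surface with $w_{c(i)}$ a boundary curve.

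That last case is where the exception enters: a rigid surface pair contains no essential two-sided simple closed curve, so it must be a thrice-punctured sphere, contrary to hypothesis. To confirm that this surface alternative is the only obstruction I would appeal to Theorem \ref{t: Variant of Shenitzer's Theorem}, realising $F$ as a vertex group of a graph of free groups with cyclic edge groups whose fundamental group is one-ended---obtained by gluing freely indecomposable pieces along the $w_l$---and reading the hypothetical splitting through the induced finite cover of that graph of groups, so that the criterion there forces the obstruction to $w_{c(i)}$ being absorbed to live in a surface vertex. The routine ingredients here are the Bass--Serre and JSJ dictionaries, the resolution of immersed disc systems, and the application of Theorem \ref{t: Variant of Shenitzer's Theorem}. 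The steps I expect to be genuinely hard, and to occupy most of the work, are: (i) the descent, where one must ensure that resolving the projected disc system does not destroy the essential intersection with $N_{c(i)}$ while it does clear every other marked curve---this is where cleanness and the single-elevation hypothesis are indispensable, and where one probably exploits that $\hat{w}_i$ is $F$-conjugate, but not $\widehat{F}$-conjugate, to the elevations of $w_{c(i)}$ that survive in $\hat{\w}\smallsetminus\{\hat{w}_i\}$; and (ii) the analysis of the degenerate fold, pinning down the thrice-punctured sphere as the unique exception.
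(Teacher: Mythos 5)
Your reduction is fine up to the point where you must ``descend'' the free splitting of $\widehat{F}$ to the base handlebody, but that descent is the entire content of the theorem and it does not work as described. An embedded compressing disc system in the finite cover $\widehat{X}$ projects only to an \emph{immersed} system in $X$, and there is no resolution principle that converts it into an embedded system in $X$ while controlling its intersections with $N$. Worse, the disjointness pattern you want downstairs is internally inconsistent: the component $N_{c(i)}=p(\widehat{N}_i)$ typically has other elevations besides $\widehat{N}_i$, these lie in $\hat{\w}\smallsetminus\{\hat{w}_i\}$ and are therefore disjoint from the disc system upstairs, so after projection the single curve $N_{c(i)}$ would have to be simultaneously met essentially (through the sheet containing $\widehat{N}_i$) and avoided (through the other sheets). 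Relative free decomposability simply does not descend through finite-index subgroups in this way; if it did, the theorem would follow from rigidity of $(F,[\w])$ alone and would not need cleanness in the combinatorial form the paper uses. Your second stage --- folding the hyperbolic curve into a cyclic splitting and identifying the thrice-punctured sphere as the degenerate case --- is essentially Lemma \ref{l: Separating pair of edges} of the paper, but you are applying it downstairs to a disc system you have not constructed.

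The paper's proof runs entirely upstairs and is combinatorial. Rigidity together with the exclusion of the thrice-punctured sphere implies, via Lemmas \ref{l: Separating pair of edges} and \ref{l: No separating vertex-edge pair}, that a minimal Whitehead graph $W(\w)$ has no separating vertex--edge pair, so deleting any single edge leaves it connected with no cut vertex. By Manning's splicing lemma, $W(\hat{\w})$ is obtained by splicing $|F:\widehat{F}|$ copies of $W(\w)$, and cleanness says exactly that $\hat{w}_i$ contributes at most one edge to each copy. Deleting $\hat{w}_i$ therefore removes at most one edge from each copy, each punctured copy remains connected without cut vertices, splicing preserves this property, and Lemma \ref{l: Cut vertices} gives free indecomposability of $(\widehat{F},[\hat{\w}\smallsetminus\{\hat{w}_i\}])$. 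If you want a topological argument, the right move is to work with the cut-ball decomposition of $\widehat{X}$ induced from a minimal basis of $F$ (which is where splicing lives), not to push the splitting down to $X$.
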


See Definition \ref{d: Clean} below for the definition of a clean subgroup.  It follows from Marshall Hall's Theorem \cite{hall_jr_subgroups_1949} that there are many clean subgroups of finite index.

Finally, let us consider the extent to which the results of this paper have a bearing on Question \ref{q: Surface subgroups}.  Our techniques provide a new characterisation of surfaces with boundary $(\pi_1\Sigma,\partial\Sigma)$ among all pairs $(F,[\w])$: they are precisely the pairs in which every non-abelian subgroup of infinite index, equipped with the induced peripheral structure, is freely indecomposable.  To develop this idea, we introduce the natural partial order on commensurability classes of subgroups of free groups with peripheral structures.  A peripheral structure $[\uu]$ on $\widehat{F}$ is \emph{compatible} with $[\w]$ if $[\uu]\subseteq[\hat{\w}]$.  We define a preorder on subgroups of $F$ equipped with compatible peripheral structures as follows.

\begin{definition}
Given $(F,[\w])$, let $(H,[\uu])$ and $(K,[\vv])$ be subgroups equipped with peripheral structures compatible with $[\w]$. Let $[\hat{\uu}]$ be the peripheral structure on $H\cap K$ induced by $[\uu]$.  Write
\[
(H,[\uu])\leq (K,[\vv])
\]
if:
\begin{enumerate}
\item $|H:H\cap K|<\infty$; and furthermore, 
\item if $|K:H\cap K|<\infty$ also then $[\hat{\uu}]$ is compatible with $[\vv]$.
\end{enumerate}
This is a preorder on subgroups equipped with compatible peripheral structures; the induced equivalence relation is called \emph{commensurability}, and the preorder $\leq$ descends to a partial order on commensurability classes.
\end{definition}

\begin{definition}
Let $\mathcal{P}(\w)$ be the poset of commensurability classes of pairs $(H,[\uu])$ such that:
\begin{enumerate}
\item $H$ is non-abelian and a finitely generated subgroup of $F$;
\item $[\uu]$ is compatible with $[\w]$;
\item the pair $(H,[\uu])$ is freely indecomposable.
\end{enumerate}
\end{definition}

\begin{corollary}\label{c: Minimal commensurability classes}
Suppose that $(F,[\w])$ is freely indecomposable.  Let $(H,[\uu])$ be a pair that represents a commensurability class in $\mathcal{P}(\w)$.  The commensurability class represented by $(H,[\uu])$ is minimal in $\mathcal{P}(\w)$ if and only if
\[
(H,[\uu])\cong(\pi_1\Sigma,\partial\Sigma)
\]
for some compact surface with boundary $\Sigma$.
\end{corollary}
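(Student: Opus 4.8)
The plan is to prove the two implications separately, with the substantive work in \emph{minimality $\Rightarrow$ surface}, which rests on the cyclic JSJ decomposition together with the Local Theorem (Theorem~\ref{t: Local theorem}); the converse is essentially surface topology, together with the hypothesis that $(F,[\w])$ is freely indecomposable. Two preliminaries are used throughout. First, every member of $\mathcal{P}(\w)$ has non-empty peripheral structure: a non-abelian subgroup of the free group $F$ is a non-abelian free group, which is freely decomposable with empty peripheral structure. Second, if $(K,[\vv])\le(H,[\uu])$ then replacing $K$ by $K\cap H$ and $[\vv]$ by its pullback produces a pair in the same commensurability class of $\mathcal{P}(\w)$ with $K\le H$, so in comparisons one may always arrange genuine containment.

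For ``surface with boundary $\Rightarrow$ minimal'', let $(H,[\uu])\cong(\pi_1\Sigma,\partial\Sigma)$ and suppose $(K,[\vv])\le(H,[\uu])$ in $\mathcal{P}(\w)$, with $K\le\pi_1\Sigma$ by the reduction above. Realise $K$ as $\pi_1$ of a cover $\Sigma'\to\Sigma$ and pass to a compact core $C\subseteq\Sigma'$; then $C$ is a compact surface with two sorts of boundary curve, those covering $\partial\Sigma$ and ``interior'' curves, the latter present exactly when $|\pi_1\Sigma:K|=\infty$. Any boundary curve of $C$ not recorded by $[\vv]$ carries a properly embedded essential arc, and cutting $C$ along such an arc exhibits a free splitting of $K$ in which $[\vv]$ is elliptic, contradicting $(K,[\vv])\in\mathcal{P}(\w)$. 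The point at which free indecomposability of $(F,[\w])$ enters is the lemma that the interior boundary curves of $C$ cannot all be captured by $[\vv]$, so that the previous sentence applies whenever the cover is infinite-sheeted (and, trivially, when it is finite and $[\vv]$ is proper). Hence $\Sigma'$ is a finite cover and $[\vv]$ records all of $\partial C$, so $(K,[\vv])$ is commensurable to $(H,[\uu])$; thus the class of $(\pi_1\Sigma,\partial\Sigma)$ is minimal.

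For ``minimal $\Rightarrow$ surface with boundary'', let the class of $(H,[\uu])$ be minimal and let $\mathcal{J}$ be the cyclic JSJ decomposition of $(H,[\uu])$. The key claim is that no rigid vertex of $\mathcal{J}$ other than a thrice-punctured sphere occurs. Given such a vertex $(V,[\uu_V])$, choose a clean finite-index subgroup $\widehat{H}\le H$ adapted to $\mathcal{J}$, set $\widehat{V}=\widehat{H}\cap V$, and apply Theorem~\ref{t: Local theorem}: deleting one element of the induced peripheral structure of $\widehat{V}$ leaves a freely indecomposable pair. Feeding the modified vertex back into the induced splitting of $\widehat{H}$ and invoking the one-endedness criterion for graphs of free groups with cyclic edge groups (Theorem~\ref{t: Variant of Shenitzer's Theorem}) yields a pair lying strictly below $(H,[\uu])$ in $\mathcal{P}(\w)$, contradicting minimality. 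With all rigid non-thrice-punctured-sphere vertices excluded, $\mathcal{J}$ has a quadratically hanging (or thrice-punctured-sphere) vertex --- a purely cyclic $\mathcal{J}$ would make $H$ a generalised Baumslag--Solitar group, impossible for a non-cyclic subgroup of a free group --- and since two adjacent quadratically hanging vertices would have been amalgamated in forming the JSJ, $\mathcal{J}$ consists of a single such vertex with no edges. Its vertex group equipped with $[\uu]$ is then $(\pi_1\Sigma,\partial\Sigma)$ for a compact surface with boundary, as required.

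The step I expect to be the main obstacle is this passage from the Local Theorem to an honest element of $\mathcal{P}(\w)$ in the forward implication. The Local Theorem concerns a single rigid pair with its full induced peripheral structure, which includes the incident JSJ edge groups; these need not be conjugate into $[\w]$, whereas membership in $\mathcal{P}(\w)$ demands compatibility with $[\w]$. One must therefore choose the clean finite-index subgroup coherently across all of $\mathcal{J}$, keep track of whether the element deleted from the vertex is a genuine boundary curve of $[\uu]$ or an edge group (the latter case requiring a separate argument, perhaps a further finite-index passage and an appeal to the doubling construction behind the Main Theorem, Theorem~\ref{t: Main theorem}), and check that the reassembled pair is simultaneously freely indecomposable, compatible with $[\w]$, and strictly smaller than $(H,[\uu])$. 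By comparison the one delicate point of the converse --- that $[\vv]$ cannot capture the interior boundary of $C$ --- is comparatively soft, being a direct consequence of free indecomposability of $(F,[\w])$.
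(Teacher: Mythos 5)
Your proof of the substantive implication (minimal $\Rightarrow$ surface) is built on ``the cyclic JSJ decomposition of $(H,[\uu])$'', but $H$ is a free group, so Bowditch's JSJ --- the only JSJ theory the paper develops --- does not apply to it; what you need is a JSJ of $H$ \emph{relative to} $[\uu]$, which the paper deliberately avoids (it cites Cashen's work as a source for such a theory and then takes a different route precisely so as not to depend on it). Moreover, the obstacle you flag in your last paragraph is not a loose end but the point where the argument actually breaks: the induced peripheral structure on a rigid vertex of such a relative JSJ consists largely --- possibly entirely --- of edge groups of the splitting, which need not be conjugate into $[\hat{\w}]$, so neither the ``modified vertex'' pair nor any pair reassembled from it is visibly an element of $\mathcal{P}(\w)$, and you never actually name the pair that is supposed to lie strictly below $(H,[\uu])$. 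There is also a gap in your no-rigid-vertices case: a decomposition whose non-cyclic vertices are all of surface type need not be a single surface vertex even when no two surface vertices are adjacent (for instance a cyclic vertex of valence three adjoining three pair-of-pants vertices), so ``no rigid vertices $\Rightarrow$ one QH vertex $\Rightarrow$ surface'' does not follow as you claim. Finally, in your converse direction the assertion that the interior boundary curves of the core $C$ cannot all be captured by $[\vv]$ is the entire content of that implication, and it is not ``a direct consequence of free indecomposability of $(F,[\w])$'': membership of $[\vv]$ in $\mathcal{P}(\w)$ only requires $[\vv]\subseteq[\hat{\w}]$, not $[\vv]\subseteq[\hat{\uu}]$, and the pullback of $[\w]$ to $\pi_1\Sigma$ may contain classes that are not boundary classes of $\Sigma$, so an interior boundary curve of $C$ can a priori be peripheral for $[\w]$. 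The paper itself asserts this direction without proof, but your justification as written does not establish it.

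The paper's actual proof of the hard direction sidesteps all of this by doubling. Since $(H,[\uu])$ is freely indecomposable and not a surface pair, $D(H,[\uu])$ is a one-ended hyperbolic graph of free groups with cyclic edge groups that is not a surface group, so Theorem \ref{t: Main theorem} (inside whose proof Bowditch's JSJ is legitimately available) supplies a finitely generated one-ended subgroup of infinite index; realising it by a finite pre-cover $X'$ of the graph of spaces for the double, Theorem \ref{t: Variant of Shenitzer's Theorem} makes every vertex pair of $X'$ freely indecomposable, and the covering theory of Section \ref{s: Covering theory} forces some vertex pair to be either of infinite index in $H$ or of strictly smaller peripheral structure. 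The decisive advantage of the double is that its edge groups \emph{are} the elements of $[\uu]\subseteq[\hat{\w}]$, so the induced peripheral structures on vertex groups of any pre-cover are automatically compatible with $[\w]$ --- exactly the property your construction lacks. You should rebuild your argument around this doubling trick rather than around a relative JSJ and the Local Theorem.
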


This raises the hope of applying Zorn's Lemma to find surface subgroups.

\begin{question}\label{q: Lower bound}
Does every chain in $\mathcal{P}(\w)$ have a lower bound?
\end{question}

An affirmative answer to Question \ref{q: Lower bound} would come very close to resolving Question \ref{q: Surface subgroups} in the case of graphs of free groups.  (There are also some mild compatibility conditions needed on the surfaces constructed.)  This would indicate that surfaces with boundary in $[\w]$ are fairly abundant in $F$.  A negative answer would indicate that such surfaces are very special indeed. 

\subsection*{Acknowledgements}

Thanks to Sang-hyun Kim for pointing me towards Question \ref{q: One-ended subgroups}.  Thanks to Cameron Gordon for teaching me everything I know about handlebodies and Whitehead's algorithm, and also for a useful conversation about Lemma \ref{l: Separating pair of edges}. Thanks to Chris Cashen and Larsen Louder for some instructive examples.

\section{Covering theory of graphs of spaces}\label{s: Covering theory}

In the proof of Theorem \ref{t: Main theorem}, we will need to use the covering theory of graphs of groups.   Such a theory was developed from an algebraic point of view by Bass \cite{bass_covering_1993}, but instead we will use the point of view of graphs of spaces, following Scott and Wall \cite{scott_topological_1979}.

The data for a graph of spaces $X$ is as follows.  We are given a graph $\Xi$, for each vertex $v$ of $\Xi$ a connected CW-complex $X_v$, and for each edge $e$ of $\Xi$ a connected CW-complex $X_e$.  If an edge $e$ adjoins vertices $v_{\pm}$, we are given corresponding attaching maps $\partial^{\pm}_e:X_e\to X_{v_\pm}$ .  These attaching maps $\partial^{\pm}_e$ are required to be $\pi_1$-injections.

The \emph{geometric realisation} of $X$ is the space
\[
\left(\coprod_{v\in V(\Xi)}X_v\sqcup\coprod_{e\in E(\Xi)} (X_e\times [-1,+1])\right)/\sim
\]
where the relation $\sim$ identifies $(x,\pm 1)\in X_e\times [-1,+1]$ with $\partial^\pm_e(x)\in X_{v_\pm}$, for each edge $e\in E(\Xi)$ and each $x\in X_e$.  We will usually abuse notation and denote the geometric realisation of $X$ by $X$.

From this topological point of view, a graph of groups with fundamental group $\Gamma$ is simply an Eilenberg--Mac~Lane space $X$ for $\Gamma$ with the structure of a graph of spaces. 

The key component of the covering theory for graphs of spaces is the definition of an elevation, which was first introduced by Wise (see, for instance, \cite{wise_subgroup_2000}).  The covering theory of graphs of spaces was further developed in \cite{wilton_elementarily_2007,wilton_halls_2008}, to which the reader is referred for proofs of some of the statements below.

Let $X$ be a graph of spaces, with underlying graph $\Xi$, vertex spaces $X_v$, edge spaces $X_e$, and attaching maps $\partial^{\pm}_e:X_e\to X_v$.  A covering space $\widehat{X}$ of $X$ is naturally endowed with the structure of a graph of spaces.  The connected components of the preimages of the vertex spaces of $X$ form the vertex spaces of $\widehat{X}$, and likewise the connected components of the preimages of the edge cylinders of $X$ form the edge cylinders of $\widehat{X}$.  The underlying graph of $\widehat{X}$, denoted by $\widehat{\Xi}$, can be recovered by collapsing the vertex spaces of $\widehat{X}$ to points and the edge spaces of $\widehat{X}$ to arcs.  The covering map $\widehat{X}\to X$ induces a combinatorial map of underlying graphs $\widehat{\Xi}\to \Xi$, which sends vertices to vertices and edges to edges.

It remains to describe the attaching maps of the covering space $\widehat{X}$.  Given an attaching map $\partial^{\pm}_e:X_e\to X_v$ of $X$ and a vertex space $\widehat{X}_{\hat{v}}$ of $\widehat{X}$, the disjoint union of the edge spaces $\{\widehat{X}_{\hat{e}}\}$ of $\widehat{X}$ that lie above $X_e$, together with the coproduct of their attaching maps, fits into a commutative diagram as follows.

\[\xymatrix{
     \coprod_{\hat{e}}\widehat{X}_{\hat{e}}\ar@{>}[r]^{\coprod_{\hat{e}}\partial^{\pm}_{\hat{e}}}\ar@{>}[d] & \widehat{X}_{\hat{v}}\ar@{>}[d] \\
     X_e\ar@{>}[r]^{\partial^{\pm}_e} & X_v
}\]
Here, $\hat{e}$ ranges over all the edges of $\widehat{\Xi}$ in the preimage of $e$.  The key observation is that this diagram is a pullback.  The restriction of the pullback of a continuous map and a covering map to a connected component is called an \emph{elevation}.   See \cite{wise_subgroup_2000,wilton_elementarily_2007,wilton_halls_2008} for further characterisations of elevations. Therefore, the attaching maps of $\widehat{X}$ are precisely the elevations of the attaching maps of $X$ to the vertex spaces of $\widehat{X}$.  In particular, the covering space $\widehat{X}$ and covering map $\widehat{X}\to X$ are determined by the restriction of the covering map to the vertex spaces and by the map of graphs $\widehat{\Xi}\to \Xi$.

There is a condition on a set of covering maps of the vertex spaces that determines whether or not they can be extended to a covering of $X$.  The \emph{degree} of an elevation is the conjugacy class of $\pi_1(\widehat{X}_{\hat{e}})$ as a subgroup of $\pi_1(X_e)$.

\begin{proposition}
Let $\{\widehat{X}_{\hat{v}}\to X_v\}$ be a set of covering maps of the vertex spaces $\{X_v\}$ of a graph of spaces $X$. These covering maps can be extended to a covering map $\widehat{X}\to X$, where the $\{\widehat{X}_{\hat{v}}\}$ are the vertex spaces of $\widehat{X}$ in the induced graph-of-spaces decomposition, if and only if the following condition holds.  For each edge $e$ of $\Xi$ with attaching maps $\partial^{\pm}_e:X_e\to X_{v_{\pm}}$, there is a degree-preserving bijection between the set of elevations of $\partial^+_e$ to $\coprod_{\hat{v}} \widehat{X}_{\hat{v}}$ and the set of elevations of $\partial^-_e$ to $\coprod_{\hat{v}} \widehat{X}_{\hat{v}}$.
\end{proposition}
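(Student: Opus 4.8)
The plan is to deduce both directions from the pullback description of elevations recalled above; once that description is in hand, the argument is essentially a matter of bookkeeping.

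Suppose first that the given vertex coverings extend to a covering $p:\widehat X\to X$. As recalled above, $\widehat X$ then carries a graph-of-spaces structure: its vertex spaces are the $\widehat X_{\hat v}$, its edge spaces $\widehat X_{\hat e}$ are connected coverings of the $X_e$, and its attaching maps are the elevations of the $\partial^\pm_e$. Moreover, for each edge $e$ of $\Xi$ and each vertex $\hat v$ of $\widehat\Xi$ over $v$, the pullback square identifies the set of edges $\hat e$ of $\widehat\Xi$ over $e$ with $+$-end at $\hat v$ with the set of elevations of $\partial^+_e$ to $\widehat X_{\hat v}$. Summing over $\hat v$, the assignment $\hat e\mapsto\partial^+_{\hat e}$ is a bijection from the edges of $\widehat\Xi$ over $e$ to the elevations of $\partial^+_e$ to $\coprod_{\hat v}\widehat X_{\hat v}$, and likewise $\hat e\mapsto\partial^-_{\hat e}$ is a bijection onto the elevations of $\partial^-_e$. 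Composing these two bijections produces the required bijection between the two sets of elevations, and it is degree-preserving because each edge $\hat e$ over $e$ carries a single edge space $\widehat X_{\hat e}$, so that $\partial^+_{\hat e}$ and $\partial^-_{\hat e}$ have the same degree.

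Conversely, suppose that for each edge $e$ of $\Xi$ we are given a degree-preserving bijection $\phi_e$ from the elevations of $\partial^+_e$ to the elevations of $\partial^-_e$. First I would assemble a graph of spaces $\widehat X$. Its underlying graph $\widehat\Xi$ has, over each vertex $v$, one vertex for each of the given coverings $\widehat X_{\hat v}\to X_v$, and, over each edge $e$, one edge for each elevation $\alpha$ of $\partial^+_e$; the edge associated to $\alpha$ runs from the vertex into which $\alpha$ elevates to the vertex into which $\phi_e(\alpha)$ elevates, and its edge space is the domain of $\alpha$, which is naturally a connected covering of $X_e$. Since $\deg\alpha=\deg\phi_e(\alpha)$, the domain of $\phi_e(\alpha)$ is isomorphic as a covering of $X_e$ to the domain of $\alpha$; composing $\phi_e(\alpha)$ with such an isomorphism, we take the two attaching maps of this edge space to be $\alpha$ and $\phi_e(\alpha)$. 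Now let $\widehat X$ be the geometric realisation and define $p:\widehat X\to X$ using the given coverings on the vertex spaces and the product of the coverings $\widehat X_{\hat e}\to X_e$ with the identity on $[-1,+1]$ on the edge cylinders; these agree along the seams precisely because the attaching maps are elevations, so that the relevant squares over the $\partial^\pm_e$ commute.

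It remains to check that $p$ is a covering map, and this I expect to be the main obstacle --- not through any single difficult idea, but because it requires keeping careful track of the local picture at the seams. Since being a covering map is a property local on the base, it suffices to verify the condition over the interior $X_e\times(-1,+1)$ of each edge cylinder, where the preimage is $\coprod_{\hat e}\widehat X_{\hat e}\times(-1,+1)$ and the condition is clear, and over the open star $U_v$ of each vertex space $X_v$ (that is, $X_v$ together with a small open collar neighbourhood), these sets forming an open cover of $X$. The preimage $p^{-1}(U_v)$ is the disjoint union, over the vertices $\hat v$ of $\widehat\Xi$ over $v$, of the corresponding open stars $\widehat U_{\hat v}$, so one must show that each $\widehat U_{\hat v}\to U_v$ is a covering. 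This is where the pullback square does the real work: the edge spaces of $\widehat X$ attached to $\widehat X_{\hat v}$ over $e$, together with their attaching maps, form precisely the pullback $\widehat X_{\hat v}\times_{X_v}X_e$ with its projection to $\widehat X_{\hat v}$ --- that is, the pullback of $\partial^+_e$ along the covering $\widehat X_{\hat v}\to X_v$ --- and similarly for $\partial^-_e$. Consequently a small collar neighbourhood of a point of $X_v$ in $U_v$, built from the preimages $(\partial^\pm_e)^{-1}$ of a neighbourhood in $X_v$, lifts uniquely to a collar neighbourhood of each of its preimages in $\widehat U_{\hat v}$, the uniqueness of the lift being exactly the universal property of the pullback; these lifts are pairwise disjoint and exhaust the preimage, so $U_v$ is evenly covered. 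The bijectivity of $\phi_e$ enters at this last step: it ensures that each elevation of $\partial^-_e$ is the $-$-end of exactly one edge of $\widehat\Xi$, so that $\widehat X$ has no dangling half-edges and the collars incident to $v$ are accounted for on both sides.
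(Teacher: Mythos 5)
Your proof is correct, and it is exactly the argument the paper has in mind: the paper itself omits the proof (deferring to the references on elevations), but the preceding discussion sets up precisely the pullback square you use, namely that the edge spaces of a cover adjoining a vertex space $\widehat{X}_{\hat v}$, with their attaching maps, constitute the full pullback of $\partial^{\pm}_e$ along $\widehat{X}_{\hat v}\to X_v$. Both your forward direction (composing the two edge-to-elevation bijections, with degrees matching because the two attaching maps of $\widehat{X}_{\hat e}$ share the domain covering $\widehat{X}_{\hat e}\to X_e$) and your converse (assembling $\widehat{\Xi}$ from the bijections $\phi_e$ and checking local triviality over the open stars via the pullback's universal property) are sound.
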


More generally, any degree-preserving bijection between subsets of the sets of elevations of the edge maps of $X$ to $\coprod_{v'} X'_{v'}$ can be used to build a graph of spaces $X'$ with vertex spaces $\{X'_{v'}\}$ and a map $X'\to X$.  The resulting space $X'$ is called a \emph{pre-cover} of $X$.  The elevations to $X'$ of attaching maps of $X$ that are not attaching maps of $X'$ are called \emph{hanging} elevations.  The previous proposition can be generalised as follows.

\begin{proposition}\label{p: Gluing precovers}
A pre-cover $X'$ of a graph of spaces $X$ can be extended to a covering map $\widehat{X}\to X$, where every vertex space of $\widehat{X}$ is a vertex space of $X'$, if and only if the following condition holds.  For each edge $e$ of $\Xi$ with attaching maps $\partial^{\pm}_e:X_e\to X_{v_{\pm}}$, there is a degree-preserving bijection between the set of hanging elevations of $\partial^+_e$ to $X'$ and the set of hanging elevations of $\partial^-_e$ to $X'$.
\end{proposition}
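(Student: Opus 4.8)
The plan is to prove the two implications separately, in each case analysing edge by edge the correspondence between edge cylinders and elevations, using the fact that the defining squares are pullbacks and that a pullback of a covering map along a continuous map is again a covering map.

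For the ``only if'' direction, suppose $X'$ has been extended to a covering $\widehat{X}\to X$ all of whose vertex spaces are vertex spaces of $X'$, and fix an edge $e$ of $\Xi$. Because $\widehat{X}\to X$ is a covering, the commutative square relating the edge spaces of $\widehat{X}$ over $X_e$ to $\partial^+_e$ is a pullback, so $\hat{e}\mapsto\partial^+_{\hat{e}}$ is a \emph{bijection} between the edge spaces of $\widehat{X}$ lying over $X_e$ and the set of \emph{all} elevations of $\partial^+_e$ to $\coprod_{\hat{v}}\widehat{X}_{\hat{v}}=\coprod_{v'}X'_{v'}$; similarly $\hat{e}\mapsto\partial^-_{\hat{e}}$ is a bijection onto the set of all elevations of $\partial^-_e$. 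Composing gives a bijection between the elevations of $\partial^+_e$ and those of $\partial^-_e$, which is degree-preserving since $\partial^+_{\hat{e}}$ and $\partial^-_{\hat{e}}$ have the same degree, namely the conjugacy class of $\pi_1\widehat{X}_{\hat{e}}$ in $\pi_1 X_e$. Now $\widehat{X}$ is obtained from $X'$ by adjoining edge cylinders, so its edge spaces over $X_e$ are exactly those of $X'$ over $X_e$ together with some new ones; under the correspondence above, the edge spaces of $X'$ account precisely for the non-hanging elevations of $\partial^\pm_e$ (an elevation of $\partial^+_e$ is a $+$-attaching map of an edge space of $X'$ if and only if it is non-hanging, and distinct edge spaces of the pre-cover over $X_e$ carry distinct $+$-attaching maps). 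Hence the bijection restricts to the pairing already supplied by $X'$ on the non-hanging elevations, and therefore also restricts to a degree-preserving bijection between the hanging elevations of $\partial^+_e$ and those of $\partial^-_e$.

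For the ``if'' direction, I would build $\widehat{X}$ by hand. For each edge $e$ and each matched pair consisting of a hanging elevation $\delta^+\colon Z^+\to X'_{v'_+}$ of $\partial^+_e$ and the corresponding hanging elevation $\delta^-\colon Z^-\to X'_{v'_-}$ of $\partial^-_e$, the degree-preserving hypothesis says $Z^+$ and $Z^-$ are isomorphic covers of $X_e$; choosing such an identification $Z:=Z^+\cong Z^-$, glue a cylinder $Z\times[-1,+1]$ onto $X'$, attaching $Z\times\{+1\}$ by $\delta^+$ and $Z\times\{-1\}$ by $\delta^-$. Let $\widehat{X}$ be the result. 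By construction $\widehat{X}$ contains $X'$, has the same vertex spaces as $X'$, and carries the structure of a graph of spaces together with a map $\widehat{X}\to X$. It remains to check this map is a covering. Over the interior of an edge cylinder of $X$ and over the interior of a vertex space of $X$ this is immediate, so the only thing to verify is the local picture along a collar of a vertex space. Fix a vertex $v$, a vertex space $X'_{\hat{v}}$ over $X_v$, and an edge $e$ with, say, attaching map $\partial^+_e\colon X_e\to X_v$. The edge cylinders of $\widehat{X}$ attached to $X'_{\hat{v}}$ along $\partial^+_e$ are those already in $X'$, with attaching maps the non-hanging elevations of $\partial^+_e$ to $X'_{\hat{v}}$, together with the newly adjoined ones, with attaching maps the hanging elevations of $\partial^+_e$ to $X'_{\hat{v}}$; these exhaust all elevations of $\partial^+_e$ to $X'_{\hat{v}}$, so the square
\[\xymatrix{
     \coprod_{\hat{e}}\widehat{X}_{\hat{e}}\ar@{>}[r]^{\coprod_{\hat{e}}\partial^{+}_{\hat{e}}}\ar@{>}[d] & X'_{\hat{v}}\ar@{>}[d] \\
     X_e\ar@{>}[r]^{\partial^{+}_e} & X_v
}\]
is a pullback. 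A pullback of a covering map along a continuous map is a covering map, so $\widehat{X}\to X$ restricts to a covering over a collar of $X'_{\hat{v}}$; as $v$, $\hat{v}$ and $e$ were arbitrary, $\widehat{X}\to X$ is a covering map. (If connected covers are required, one replaces $\widehat{X}$ by the component of a chosen basepoint; the hypothesis is only needed componentwise.)

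The routine content is the gluing bookkeeping. The one point that needs care, and which is used in both directions, is the translation between the combinatorial condition on hanging elevations and the geometric condition of being a covering: namely that ``at every vertex space of $X'$, every elevation of every incident edge map occurs as an attaching map'' is exactly the statement that the defining squares are pullbacks. This is the pullback characterisation of elevations recalled above, specialised from honest covers (the previous proposition) to pre-covers; the extra ingredient here is simply that the attaching maps of $X'$ together with the matched hanging elevations fill out the full set of elevations at every vertex space.
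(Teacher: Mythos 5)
Your proof is correct. The paper states this proposition without proof (it is presented as a generalisation of the preceding unproved proposition, with references to \cite{wise_subgroup_2000,wilton_elementarily_2007,wilton_halls_2008}), and your argument -- using the pullback characterisation of elevations to match edge spaces of a cover bijectively and degree-preservingly with elevations of $\partial^{\pm}_e$ in one direction, and gluing cylinders along matched pairs of hanging elevations and verifying the pullback condition at each vertex space in the other -- is exactly the intended standard argument.
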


It follows that pre-covers can always be extended to covers.

\begin{lemma}\label{l: Pre-covers extend to covers}
If $X'$ is a pre-cover of $X$ then the map $X'\to X$ can be extended to a covering map $\widehat{X}\to X$, where $X'$ is a sub-graph of spaces of $\widehat{X}$ and the inclusion $X'\hookrightarrow \widehat{X}$ induces an isomorphism on fundamental groups.
\end{lemma}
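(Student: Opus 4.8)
The plan is to deduce this from the capping construction underlying Proposition \ref{p: Gluing precovers}: I would repeatedly adjoin to $X'$ new vertex spaces that ``cap off'' its hanging elevations without changing the fundamental group, and then pass to a nested union.

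First I would isolate the basic \emph{capping} move. Suppose $Z$ is a pre-cover of $X$ and $\alpha$ is a hanging elevation of, say, $\partial^+_e$ into a vertex space $Z_{\hat v}$ lying over $X_{v_+}$ (the case of $\partial^-_e$ is symmetric), and let $[H]$ be its degree, so $H\subseteq\pi_1(X_e)$. Let $Y_\alpha\to X_{v_-}$ be the covering space corresponding to the subgroup $(\partial^-_e)_*(H)\subseteq\pi_1(X_{v_-})$. Because $H\subseteq\pi_1(X_e)$ and $\partial^-_e$ is $\pi_1$-injective, a short double-coset computation shows that $\partial^-_e$ has an elevation $\beta_\alpha$ to $Y_\alpha$ of degree exactly $[H]$, and that the attaching map $\beta_\alpha$ identifies the fundamental group of the corresponding edge space (a cover of $X_e$ with fundamental group $H$) isomorphically with $\pi_1(Y_\alpha)$. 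Now form $Z^\alpha$ by attaching to $Z$ a fresh copy of $Y_\alpha$ together with a fresh edge cylinder over $X_e$, glued via $\alpha$ at its $+1$ end and via $\beta_\alpha$ at its $-1$ end. Then $Z^\alpha$ is again a pre-cover of $X$, it contains $Z$ as a sub-graph of spaces, and $\alpha$ has become an attaching map of $Z^\alpha$; moreover, since we have amalgamated $\pi_1(Z)$ with $\pi_1(Y_\alpha)$ over the isomorphism induced by $\beta_\alpha$, the inclusion $Z\hookrightarrow Z^\alpha$ induces an isomorphism on fundamental groups.

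I would then iterate. Put $X_0=X'$, and let $X_{n+1}$ be obtained from $X_n$ by performing the capping move simultaneously at every hanging elevation of $X_n$, using a fresh capping space and a fresh edge cylinder in each case. Each inclusion $X_n\hookrightarrow X_{n+1}$ is a $\pi_1$-isomorphism exhibiting $X_n$ as a sub-graph of spaces of $X_{n+1}$, so the nested union $\widehat X=\bigcup_n X_n$ is a graph of spaces containing $X'$ with $\pi_1(X')\to\pi_1(\widehat X)$ an isomorphism. By construction $\widehat X$ has no hanging elevations: any vertex space of $\widehat X$ appears at some finite stage $X_n$, and every elevation of an edge map $\partial^\pm_e$ to it that was still hanging in $X_n$ has been turned into an attaching map in $X_{n+1}$. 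Since thus every elevation of every edge map to $\widehat X$ is realised as an attaching map, and the edge cylinders of $\widehat X$ pair $\partial^+_e$-elevations with $\partial^-_e$-elevations of equal degree, the induced map $\widehat X\to X$ is a covering map, and it restricts to the given map $X'\to X$.

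The part that needs care is the bookkeeping in the last paragraph: one must check that each capping move leaves every previously present attaching map intact, that the (possibly infinitely many) new hanging elevations contributed by the capping spaces at stage $n+1$ are all capped at stage $n+2$, and hence that no hanging elevation survives into $\widehat X$. Given the completely explicit description of the capping move this is routine. The remaining ingredients — $\pi_1$-injectivity of a single capping move, the degree computation for $\beta_\alpha$, and the fact that absence of hanging elevations makes $\widehat X\to X$ a cover — are immediate from the covering theory already developed. (If $X'$ is disconnected one either reads the $\pi_1$-statement componentwise or applies the result to each component; the construction is otherwise unchanged.)
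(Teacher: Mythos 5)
Your argument is correct, and it proves the lemma. The underlying idea is the same as the paper's --- cap off each hanging elevation by gluing on something whose fundamental group is exactly the edge subgroup, so that the amalgamation does not change $\pi_1$ --- but the implementation differs. The paper produces the entire cap in one step: for a hanging elevation $\partial^{\pm}_{\hat e}$ with edge subgroup $H=\pi_1(\widehat{X}_{\hat e})$ it takes the covering space $Y_{\hat e}$ of the \emph{whole} of $X$ corresponding to $H\subseteq\pi_1(X)$, deletes the edge space carrying $H$, and uses a component $Z_{\hat e}$ of the result; this is automatically a pre-cover with a unique hanging elevation of the correct degree and with $\pi_1(Z_{\hat e})\cong H$, so a single application of Proposition \ref{p: Gluing precovers} finishes the proof. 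You instead build this cap by hand, one vertex space at a time, via the cover of $X_{v_-}$ corresponding to $(\partial^-_e)_*(H)$ and a nested union; your double-coset computation of the degree of $\beta_\alpha$ and the observation that each capping move is an amalgamation over the full fundamental group of the new piece are both right, and the limit has no hanging elevations, so it is a cover. What your route costs is the iteration and its bookkeeping (which you correctly flag and which does go through); what the paper's route buys, besides brevity, is that the auxiliary object $Y_{\hat e}$ and the fact that its underlying graph is a tree are reused immediately in the proof of the following lemma on infinite index. Your construction in fact converges to (a space isomorphic to) the paper's $Z_{\hat e}$, so the two proofs produce essentially the same cover.
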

\begin{proof}
Let $\partial^{\pm}_{\hat{e}}:\widehat{X}_{\hat{e}}\to X'$ be a hanging elevation to $X'$ of an attaching map $\partial^{\pm}_e:X_e\to X_v$.  For each such elevation $\partial^{\pm}_{\hat{e}}$, let $Y_{\hat{e}}$ be the covering space of $X$ with fundamental group $\pi_1(\widehat{X}_{\hat{e}})$.  There is an edge space of $Y_{\hat{e}}$ with fundamental group $\pi_1(\widehat{X}_{\hat{e}})$.  Delete this edge space from $Y_{\hat{e}}$, and let $Z_{\hat{e}}$ be a component of the result.  Then $Z_{\hat{e}}$ is a pre-cover of $X$ with a unique hanging elevation, which we denote by $\partial^{\mp}_{\hat{e}}:\widehat{X}_{\hat{e}}\to Z_{\hat{e}}$.  The pre-cover of $X$ that consists of the disjoint union of $X'$ and all the $Z_{\hat{e}}$, where $\hat{e}$ ranges over all the hanging elevations to $X'$ of attaching maps of $X$, satisfies the hypotheses of the previous proposition.  The resulting covering space $\widehat{X}$ has the required properties by construction.
\end{proof}

As an immediate result, pre-covers define subgroups.

\begin{lemma}
If $X'$ is a connected pre-cover of a graph of spaces $X$ then the map $X'\to X$ induces a monomorphism at the level of fundamental groups.
\end{lemma}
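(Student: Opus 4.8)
The plan is to read this off immediately from Lemma~\ref{l: Pre-covers extend to covers}, which has already done all the work. Since $X'$ is a connected pre-cover of $X$, that lemma produces a covering map $\widehat{X}\to X$ extending the given map $X'\to X$, such that $X'$ is a sub-graph of spaces of $\widehat{X}$ and the inclusion $\iota\colon X'\hookrightarrow\widehat{X}$ induces an isomorphism on fundamental groups. Since $X'$ is connected it lies inside a single connected component of $\widehat{X}$; replacing $\widehat{X}$ by that component (which is still a covering space of the connected space $X$) we may assume $\widehat{X}$ itself is connected, so that the usual theory of covering spaces applies.

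Now the given map $X'\to X$ factors as the composition $X'\xrightarrow{\ \iota\ }\widehat{X}\to X$. On fundamental groups the first map is an isomorphism, by the conclusion of Lemma~\ref{l: Pre-covers extend to covers}, and the second is a monomorphism because it is the map induced by a covering of connected spaces. A composition of a monomorphism with an isomorphism is a monomorphism, so $\pi_1(X')\to\pi_1(X)$ is injective, as claimed.

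I do not expect any real obstacle: the substantive content lies entirely in Lemma~\ref{l: Pre-covers extend to covers}, and the only point requiring a word of care is the reduction to a connected ambient cover so that covering-space theory can be invoked verbatim. (One could instead argue directly from the graph-of-groups structure, lifting an arbitrary loop in $X'$ and putting it in Bass--Serre normal form to detect non-triviality in $\pi_1 X$, but this duplicates the elevation bookkeeping already packaged in the preceding lemmas and is strictly less efficient, so I would not pursue it.)
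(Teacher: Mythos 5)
Your proof is correct and matches the paper exactly: the paper states this lemma as "an immediate result" of Lemma~\ref{l: Pre-covers extend to covers}, with precisely the factorisation $X'\hookrightarrow\widehat{X}\to X$ you describe (a $\pi_1$-isomorphism followed by the injection induced by a covering map). Your extra remark about passing to a connected component of $\widehat{X}$ is a harmless precaution; the construction in that lemma already yields a connected cover when $X'$ is connected.
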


A graph of spaces $X$ is called \emph{reduced} if no attaching map is a $\pi_1$-surjection.

\begin{lemma}
Suppose $X$ is a reduced graph of spaces.  If $X'$ is a pre-cover but not a cover of $X$ then $\pi_1(X')$ has infinite index in $\pi_1(X)$.
\end{lemma}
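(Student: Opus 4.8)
The plan is to argue by contradiction. We may assume the underlying graph of $X$ is connected with at least one edge, since otherwise $X'$ is automatically a cover. So suppose $\pi_1(X')$ has finite index in $\pi_1(X)$. First I would invoke Lemma~\ref{l: Pre-covers extend to covers} to realise $X'$ as a sub-graph of spaces of a covering $\widehat{X}\to X$ with $X'\hookrightarrow\widehat{X}$ inducing an isomorphism on fundamental groups; under the finite-index hypothesis this makes $\widehat{X}\to X$ finite-sheeted, so $\widehat{X}$ is a finite complex whose vertex and edge spaces are finite covers of those of $X$. Since $X'$ is not a cover it has a hanging elevation, so $X'\neq\widehat{X}$; following the proof of Lemma~\ref{l: Pre-covers extend to covers}, $\widehat{X}$ is obtained from $X'$ by attaching the pieces $Z_{\hat e}$ associated to the hanging elevations along their joining edge spaces, and I would fix one such piece $Z$, attached to $X'$ along a single edge space $\widehat{X}_{\hat e}$.

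Next comes a normal-form argument on the resulting star-shaped graph-of-groups decomposition of $\pi_1(\widehat{X})$: since $\pi_1(X')\to\pi_1(\widehat{X})$ is onto, each pendant vertex group must coincide with its edge group, so in particular $\pi_1(Z)=\pi_1(\widehat{X}_{\hat e})$ and $Z$ sits $\pi_1$-isomorphically as a sub-graph of spaces inside the cover $Y\to X$ with $\pi_1(Y)=\pi_1(\widehat{X}_{\hat e})$. Here I would bring in reducedness. Because $X$ is reduced, every vertex of the Bass--Serre tree $T$ of the given splitting has valence at least two, so $T$ is infinite; consequently no finite-index subgroup of $\pi_1(X)$ can fix a vertex of $T$ (a finite orbit would have a bounded invariant convex hull, which would force $\pi_1(X)$ into a vertex group, impossible for a reduced splitting with an edge), and in particular no edge group of $X$ is of finite index in $\pi_1(X)$. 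Since $\widehat{X}_{\hat e}$ is an edge space, $\pi_1(\widehat{X}_{\hat e})$ lies in a conjugate of $\pi_1(X_e)$, so $\pi_1(Y)=\pi_1(\widehat{X}_{\hat e})$ has infinite index in $\pi_1(X)$ and $Y$ is a non-compact cover of $X$.

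The hard part will be to turn this into a contradiction with the finiteness of $\widehat{X}$: the piece $Z$ is a subcomplex of the finite complex $\widehat{X}$, hence finite, yet it is a $\pi_1$-isomorphically embedded sub-graph of spaces of the non-compact cover $Y$. To rule this out I would work locally at the vertex space $Z_0$ of $Z$ meeting $\widehat{X}_{\hat e}$: this $Z_0$ is a vertex space of $Y$, and $\widehat{X}_{\hat e}\to Z_0$ is an elevation of an attaching map $\partial_e$ of $X$, which by reducedness has image of index at least two in its target vertex group. The point is to exploit this---via the bookkeeping of degrees of elevations of attaching maps at a vertex space of a cover---to show that, inside $Y$, the vertex space $Z_0$ carries edge spaces beyond the single copy of $\widehat{X}_{\hat e}$ drawn into $Z$, and then to propagate this outward along $Y$ to conclude that $Z$, and hence $\widehat{X}$, is in fact non-compact. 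Making this propagation rigorous---ruling out that the extra edges at $Z_0$ lead only to finite pieces, which again rests on reducedness being inherited in a controlled way by the relevant covers---is, I expect, the real content of the lemma; everything preceding it is routine covering theory together with van Kampen's theorem.
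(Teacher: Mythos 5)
Your reduction is correct and runs parallel to the paper's argument: after extending $X'$ to a cover $\widehat{X}$ with $\pi_1(X')\cong\pi_1(\widehat{X})$ via Lemma~\ref{l: Pre-covers extend to covers}, everything comes down to showing that a pendant piece $Z=Z_{\hat e}$ attached along a hanging elevation meets some fibre of $\widehat{X}\to X$ infinitely often, so that the covering degree is infinite. Your preliminary observations (the normal-form argument giving $\pi_1(Z)=\pi_1(\widehat{X}_{\hat e})$, and the fact that edge groups of a reduced splitting have infinite index, so the ambient cover $Y=Y_{\hat e}$ is infinite) are fine, though the second is not by itself enough: a pre-cover can perfectly well be a \emph{finite} subcomplex of an infinite cover, which is exactly the possibility you must exclude.

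That exclusion is the step you do not carry out: you describe the propagation argument as ``the real content of the lemma'' and say you ``expect'' it can be made rigorous, so the proof is incomplete precisely where the hypothesis of reducedness has to do its work. Two facts are needed and both are missing from your sketch. First, the underlying graph $\Upsilon$ of $Y$ is a \emph{tree}: $\pi_1(Y)$ equals the fundamental group of one of its edge spaces, and every vertex and edge group of the induced decomposition of $\pi_1(Y)$ dies under the retraction $\pi_1(Y)\to\pi_1(\Upsilon)$, forcing $\pi_1(\Upsilon)=1$. Without treeness, your intended conclusion fails: a cycle has every vertex of valence two, yet deleting one edge from it leaves a finite path, so your worry that ``the extra edges at $Z_0$ lead only to finite pieces'' is a genuine obstruction that valence counting alone cannot dismiss. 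Second, one must verify that reducedness forces every vertex of $\Upsilon$ to have valence greater than one (in a genuine cover each attaching map incident at $v$ has at least one elevation to each vertex space over $X_v$, and non-surjectivity supplies a second incident edge space); this is short, but it is the only place the hypothesis enters and you only allude to it. With both facts in hand, $\Upsilon$ is an infinite tree, $\Upsilon$ minus one edge still has an infinite component containing $Z$, and so $\widehat{X}\to X$ has infinite degree, completing the argument. In short: right strategy, same as the paper's, but the decisive step is asserted rather than proved, and the sketch of it omits the treeness of $\Upsilon$ without which the propagation does not go through.
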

\begin{proof}
Consider the space $Y_{\hat{e}}$ constructed in the proof of Lemma \ref{l: Pre-covers extend to covers}, and let $\Upsilon$ be its underlying graph.  Because $\pi_1(Y_{\hat{e}})$ is equal to the fundamental group of one of its edge spaces, $\Upsilon$ is a tree.  Because $X$ is reduced, every vertex of $\Upsilon$ has valence greater than one.  Therefore, $\Upsilon$ is infinite.  It follows that there are points of $X$ with infinitely many pre-images in $Z_{\hat{e}}$, and hence $\widehat{X}\to X$ has infinite degree.
\end{proof}

\section{A variant of a theorem of Shenitzer}

In this section, we prove a theorem that describes when the fundamental group of a graph of groups with cyclic edge groups splits freely.

\begin{theorem}\label{t: Variant of Shenitzer's Theorem}
Let $\Gamma$ be finitely generated, and the fundamental group of a graph of groups with infinite cyclic edge groups.   Then $\Gamma$ is one-ended if and only if every vertex group is freely indecomposable relative to the incident edge groups.
\end{theorem}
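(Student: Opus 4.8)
The plan is to prove the non-trivial implication by contraposition, after a standard reduction. Collapsing any edge whose edge group is all of an incident vertex group, we may assume $\mathcal{G}$ is reduced; if $\mathcal{G}$ has no edges the statement is the standard equivalence of one-endedness with free indecomposability (the cases $\Gamma$ finite or infinite cyclic being elementary), so assume $\mathcal{G}$ is reduced with at least one edge. For the easy direction, suppose a vertex group $G_v$ is freely decomposable relative to its incident edge groups: $G_v=A\ast B$ with $A,B\neq 1$ and each incident edge group conjugate into $A$ or into $B$. Blowing up the vertex $v$, by replacing it with vertices carrying $A$ and $B$ joined by an edge with trivial edge group and re-attaching each incident edge on the appropriate side, yields a decomposition of $\Gamma$ with an edge carrying the trivial group. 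Collapsing all other edges leaves a single such edge: either $\Gamma=P\ast Q$ with $P,Q$ non-trivial (they contain conjugates of $A$ and of $B$), or $\Gamma=P\ast\Z$ with $P$ non-trivial. In either case $\Gamma$ has more than one end, so it is not one-ended.

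For the converse, suppose $\Gamma$ is not one-ended. As $\Gamma$ is finitely generated and infinite, Stallings' theorem gives a minimal non-trivial action of $\Gamma$ on a tree $S$ with finite edge stabilisers; let $T$ be the Bass--Serre tree of $\mathcal{G}$. The crucial first step is that \emph{some vertex group of $\mathcal{G}$ fails to fix a point of $S$}: if every $G_v$ fixed a point $p_v\in S$, then $\gamma\tilde v\mapsto\gamma p_v$ would define a $\Gamma$-equivariant map on the vertices of $T$, extended over edges by geodesics in $S$; the stabiliser of an edge of $T$ is a conjugate of an edge group of $\mathcal{G}$, hence infinite, it fixes both endpoints of that edge and so the geodesic between their images, and an infinite group cannot sit inside a finite edge stabiliser of $S$, so that geodesic is a point; as $T$ is connected the map collapses $T$ to a point, contradicting minimality of the action on $S$. (This is the one place where it is essential that the edge groups be infinite.) So some $G_v$ acts non-trivially on $S$, hence splits non-trivially over a finite group, which in the torsion-free setting relevant to this paper is a non-trivial free splitting of $G_v$.

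It remains to arrange that the incident edge groups of $G_v$ are elliptic in $S$, so that this free splitting witnesses free decomposability relative to them. I would take $S$ to be the Grushko tree of $\Gamma$: its vertex stabilisers are the conjugates of the one-ended free factors $\Gamma_1,\dots,\Gamma_k$ and trivial groups, and a subgroup is elliptic iff it is conjugate into some $\Gamma_i$. If all incident edge groups of the non-elliptic vertex group are elliptic, we are done. Otherwise, an edge group hyperbolic in $S$ forces both endpoint vertex groups to be non-elliptic in $S$, so collapsing the edges that are elliptic in $S$ strictly reduces the number of edges; one can then set up an induction on the number of edges (via a version of the theorem relative to the outgoing edges, with base case $\Gamma\cong F_n$ free being Shenitzer's classical theorem, proved via Whitehead's algorithm), reducing to the case that \emph{every} edge of $\mathcal{G}$ is hyperbolic in $S$. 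Here, for a one-ended free factor $\Gamma_i$ acting on $T$: either $\Gamma_i$ is elliptic in $T$, or it acts non-trivially with edge stabilisers $\Gamma_i\cap C^{g}$, and these cannot be trivial (else $\Gamma_i$ would split freely, contrary to one-endedness), so they are infinite cyclic; since $\Gamma_i$ is a free factor, the centraliser of such an element lies in $\Gamma_i$ and contains a conjugate of the generator of $C$, forcing $C$ to be conjugate into $\Gamma_i$ --- contradicting that $C$ is hyperbolic in $S$. Hence every $\Gamma_i$ is elliptic in $T$.

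The main obstacle is exactly this residual configuration: every one-ended free factor $\Gamma_i$ elliptic in $T$ while every edge group of $\mathcal{G}$ is hyperbolic in the Grushko tree. Now the $\Gamma_i$ are distributed among the vertex groups of $\mathcal{G}$, and one must show this is incompatible with every vertex group being freely indecomposable relative to its incident edge groups --- presumably by pushing the analysis inside the vertex groups, studying their induced Grushko decompositions together with the loop structure of $\mathcal{G}$ and the free part of the Grushko decomposition, and reducing once more to the free case. Carrying this out cleanly, together with the bookkeeping for HNN edges and for vertex groups with torsion, is the delicate part.
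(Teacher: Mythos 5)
Your easy direction is fine, and your opening move for the converse --- that some vertex group must act non-trivially on the tree $S$ of a free splitting, because the infinite cyclic edge groups would otherwise force an equivariant collapse of the Bass--Serre tree of $\mathcal{G}$ to a point --- is correct and is implicit in the paper's setup as well. But the proof has a genuine gap, and you have located it yourself: the ``residual configuration'' in which every edge group of $\mathcal{G}$ is hyperbolic in the free splitting is not a degenerate leftover case to be mopped up by bookkeeping; it is the entire content of the theorem. Already the simplest instances live there: take $\Gamma$ free with all vertex groups free (e.g.\ a double of $F_2$ along a word that is not a proper power), so the Grushko tree has trivial vertex stabilisers, there are no one-ended free factors $\Gamma_i$ at all, and every infinite cyclic edge group is hyperbolic. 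Your proposed base case, ``Shenitzer's classical theorem,'' only treats a one-edge amalgam of two free groups over $\Z$; the statement needed here is the generalisation to arbitrary graphs of free groups with cyclic edge groups (including HNN edges and multiple edges at a vertex, where one must produce a free splitting of a vertex group relative to \emph{all} of its incident edge groups simultaneously), and that generalisation is precisely the theorem being proved. The intermediate reductions are also not free: collapsing the edges that are elliptic in $S$ changes the vertex groups by amalgamation, and transporting a relative free splitting of a collapsed vertex group back to one of the original vertex groups is itself an instance of the relative statement you are trying to prove, so the induction as sketched is circular.

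For comparison, the paper resolves exactly this case by a track-style argument rather than by induction: it resolves the cyclic splitting against the free splitting via a $\Gamma$-equivariant map $f\colon\widetilde{\mathcal{X}}\to T$ from a ``graph of graphs of groups'' built from the minimal subtrees $T_v$, takes the preimage $Y_t$ of the midpoint of an edge of $T$, observes that $Y_t$ is a finite forest (its loops would have to stabilise edges of $T$, which have trivial stabilisers), and then reads off a free splitting of some vertex group relative to its incident edge groups from an isolated vertex or a leaf of $Y_t$ (the leaf gives an edge of $X_v$ crossed exactly once by exactly one incident edge space, hence non-separating, hence a basis element splitting $\Gamma_v$ relative to its edges). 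Some argument of this kind --- producing the relative free splitting of a vertex group directly from the intersection pattern of the two trees --- is what your proposal is missing.
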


This statement is similar to a theorem of Shenitzer \cite{shenitzer_decomposition_1955}, which says that an amalgam of two free groups along a cyclic group is free if and only if the amalgamating cyclic group is a free factor in one of the free groups---see \cite{louder_krull_2008} for a modern treatment and a generalisation.  See also \cite{diao_grushko_2005}.  In the case of doubles, Theorem \ref{t: Variant of Shenitzer's Theorem} was stated without proof in \cite{gordon_surface_2010}.  It is undoubtedly well known to experts, but we were unable to find a proof in the literature, so we give one here.

\begin{remark}
The hypothesis that the edge groups are cyclic cannot be removed.  Indeed, the free group of rank two can be written as an HNN extension of $F_3\cong\langle a,b,c\rangle$ that conjugates $\langle a,b\rangle$ to $\langle b,c\rangle$, but $F_3$ does not split freely relative to $\langle a,b\rangle$ and $\langle b,c\rangle$.
\end{remark}

One direction of the theorem is obvious: if some vertex group splits freely relative to the incident edge groups then $\Gamma$ also splits freely.

To prove the other direction, we will realise $\Gamma$ as the fundamental group of a certain complex of groups; see \cite{bridson_metric_1999} for a detailed discussion of Haefliger's theory of complexes of groups.  By hypothesis, we are given a splitting of $\Gamma$ with finitely generated vertex groups $\Gamma_v$ and infinite cyclic edge groups $\Gamma_e$; let $\Xi$ be the underlying graph of this splitting. Now let $T$ be the Bass--Serre tree of a non-trivial free splitting of $\Gamma$.  Because the edge stabilisers of $T$ are trivial, any finitely generated non-trivial subgroup $H$ of $\Gamma$ has a unique minimal invariant subtree of $T$, on which $H$ acts cocompactly.  Let $T_v$ be the minimal invariant subtree of the vertex group $\Gamma_v$, and write $X_v$ for the quotient graph of groups $T_v/\Gamma_v$.  Likewise, let $T_e$ be the minimal invariant subtree of the edge group $\Gamma_e$, which is either a point or a line; again, write $X_e$ for the quotient graph of groups $T_e/\Gamma_e$, which is either topologically a circle or a point labelled by $\Gamma_e$.  The graphs of groups $X_v$ and $X_e$ are finite.  If $e$ is incident at $v$ then there is a natural inclusion $T_e\to T_v$ which descends to a morphism of graphs of groups $X_e\to X_v$.

Using these morphisms $X_e\to X_v$ as attaching maps, we can build a complex of groups, indeed a graph of graphs of groups, $\mathcal{X}$, with fundamental group $\Gamma$, just as we built a graph of spaces at the beginning of Section \ref{s: Covering theory}.  That is, the vertex `spaces' of $\mathcal{X}$ are the graphs of groups $X_v$, the edge `spaces' are the cylinders $X_e\times [0,1]$, the attaching maps are given by the morphisms $X_e\to X_v$, and the underlying graph is $\Xi$.  The universal cover of $\mathcal{X}$, which we denote by $\widetilde{\mathcal{X}}$, is a tree of copies of trees $T_v$, glued along strips or arcs of the form $T_e\times I$.  The underlying tree of $\widetilde{\mathcal{X}}$, which is the Bass--Serre tree of the given cyclic splitting of $\Gamma$, we denote by $\mathcal{T}$.

There is a natural $\Gamma$-equivariant map $f:\widetilde{\mathcal{X}}\to T$ defined as follows: the restriction of $f$ to $T_v$ is the inclusion $T_v\hookrightarrow T$; on $T_e\times [-1,+1]$, $f$ is the composition of the projection to $T_e$ with the inclusion $T_e\hookrightarrow T$.  Let $t$ be the midpoint of an edge in $T$, let $\widetilde{Y}_t =f^{-1}(t)$, and let $Y_t$ be the image of $\widetilde{Y}_t$ in $\mathcal{X}$.  By construction, for any $v$ the intersection of $Y_t$ with $X_v$ is a finite union of points, and the intersection of $Y_t$ with $X_e\times[-1,+1]$ is a finite union of arcs of the form $*\times [-1,+1]$.  Therefore $Y_t$ is topologically a finite graph, with vertices in the vertex graphs of groups of $\mathcal{X}$ and edges in the edge cylinders of $\mathcal{X}$.

\begin{lemma}
The inclusion $Y_t\hookrightarrow \mathcal{X}$, when restricted to a component, induces an injection on fundamental groups.
\end{lemma}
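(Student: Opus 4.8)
The plan is to prove the stronger statement that every component of $Y_t$ is a tree; since a tree has trivial fundamental group, the inclusion of a component into $\mathcal X$ then induces an injection into $\pi_1\mathcal X=\Gamma$ for the trivial reason that its source is the trivial group. The key structural fact I would establish first is that, because $f$ is the inclusion $T_v\hookrightarrow T$ on each copy of a tree $T_v$ and factors through the projection $T_e\times[-1,+1]\to T_e$ on each strip, and because $t$ is the midpoint of an edge of $T$ and hence not a vertex, $f^{-1}(t)$ meets each copy of a $T_v$ in at most one point, lying in the interior of an edge, and meets each strip in at most one arc $*\times[-1,+1]$; the degenerate strips, on which $f$ is constant at a vertex, contribute nothing. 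In particular every point of $\widetilde Y_t$, hence of $Y_t$, has trivial local group, so $Y_t$ is an honest subspace of $\mathcal X$ and the statement makes literal sense.

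The first step is to pass to the universal cover. We may assume $\Gamma$ acts on $T$ without inversions, so the stabiliser of $t$ equals that of its edge, which is trivial since $T$ is the Bass--Serre tree of a free splitting; hence the $\Gamma$-translates $f^{-1}(gt)$ are pairwise disjoint, and the projection $\widetilde{\mathcal X}\to\mathcal X$ restricts to a homeomorphism from $\widetilde Y_t=f^{-1}(t)$ onto $Y_t$. It therefore suffices to show that $\widetilde Y_t$ is a forest.

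The second step is to embed $\widetilde Y_t$ into the underlying tree $\mathcal T$ of $\widetilde{\mathcal X}$. By the above, $\widetilde Y_t$ is a graph whose vertices are the points of $f^{-1}(t)$ lying in copies of the $T_v$ — at most one per copy, as $f$ embeds $T_v$ in $T$ — and whose edges are the arcs of $f^{-1}(t)$ lying in strips — at most one per strip. Moreover, if a strip $T_e\times[-1,+1]$ carrying such an arc is glued to the copies $A_\pm$ of $T_{v_\pm}$, then, since $T_e\subseteq T_{v_\pm}$, each $A_\pm$ also carries a vertex of $\widetilde Y_t$, and these two vertices are precisely the endpoints of the arc. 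Sending each vertex of $\widetilde Y_t$ to the vertex of $\mathcal T$ carrying the corresponding copy of a $T_v$, and each edge of $\widetilde Y_t$ to the edge of $\mathcal T$ carrying the corresponding strip, therefore defines a graph homomorphism $\widetilde Y_t\to\mathcal T$ which is injective on vertices, injective on edges, and incidence-preserving. Any such homomorphism identifies $\widetilde Y_t$ with a subgraph of the tree $\mathcal T$, so $\widetilde Y_t$ is a forest, each component is a tree, and we are done.

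I expect the only step requiring genuine care to be the combinatorial description of $\widetilde Y_t$ used in the last paragraph: that each copy of a $T_v$ and each strip contributes at most one cell, and that the incidences of those cells are exactly as claimed. That is where one uses that $t$ is an edge-midpoint, that $f$ embeds the tree factors, and that each $T_e$ lies inside the incident $T_{v_\pm}$; the passage to the universal cover and the reduction to showing $\widetilde Y_t$ is a forest are formalities.
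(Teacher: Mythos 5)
Your proof is correct, and its engine is the same as the paper's: both arguments come down to mapping $\widetilde Y_t$ to the underlying tree $\mathcal T$ of the cyclic splitting (one vertex per copy of a $T_v$, one edge per strip, using injectivity of $T_v\hookrightarrow T$ and of the gluing maps $T_e\to T_{v_\pm}$) and observing that a subgraph of a tree is a forest. The difference is in the packaging. The paper stops at $\pi_1$-injectivity: it shows each component $\widetilde Z$ of the preimage of a component of $Y_t$ embeds in $\mathcal T$, hence is simply connected, and invokes covering theory; it only deduces that $Y_t$ itself is a forest later, in the proof of Theorem \ref{t: Variant of Shenitzer's Theorem}, by combining the lemma with the triviality of the edge stabilisers of $T$. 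You front-load that second ingredient: since the stabiliser of the midpoint $t$ is trivial (no inversions, trivial edge groups), the translates $f^{-1}(gt)$ are pairwise disjoint and $\widetilde Y_t\to Y_t$ is a bijection (indeed a homeomorphism, as $\widetilde Y_t$ is clopen in $f^{-1}(\Gamma t)$), so $Y_t$ is literally a finite forest and the lemma becomes vacuous. This buys a slightly stronger statement and lets the subsequent proof of Theorem \ref{t: Variant of Shenitzer's Theorem} skip the equivariance argument about loops stabilising edges; the cost is that your lemma now depends on the edge groups of $T$ being trivial, whereas the paper's $\pi_1$-injectivity argument is purely about the combinatorics of $\mathcal T$. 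Your side remarks are also sound: the points of $Y_t$ all have trivial local groups (edge stabilisers of $T_v/\Gamma_v$ are trivial, and $\Gamma_e$ acts freely on the line $T_e$ when $T_e$ is not a point), and the degenerate strips with $T_e$ a vertex of $T$ miss the edge-midpoint $t$ entirely.
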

\begin{proof}
Let $Z$ be a component of $Y_t$ and let $\widetilde{Z}$ be a component of the preimage of $Z$ in $\widetilde{\mathcal{X}}$.  The lemma follows from the fact that the composition $\widetilde{Z}\hookrightarrow \widetilde{\mathcal{X}}\to\mathcal{T}$ is injective.  Suppose not.  Then, because $\mathcal{T}$ is a tree, some pair of adjacent edges in $\widetilde{Z}$ map to the same edge in $\mathcal{T}$.  This implies that this pair of edges of $\widetilde{Z}$ is contained in a single edge strip $T_e\times [-1,+1]$ of $\widetilde{\mathcal{X}}$.  But the gluing maps map $T_e\to T_v$ are injective, so no two edges of $\widetilde{Z}$ contained in $T_e\times[-1,+1]$ are adjacent.
\end{proof}

\begin{proof}[Proof of Theorem \ref{t: Variant of Shenitzer's Theorem}]
The free splitting of $\Gamma$ represented by $T$ is assumed to be non-trivial, so there is an edge of $T$ in the image of $\mathcal{\widetilde{X}}$; let $t$ be the midpoint of this edge. Because the map $\widetilde{Y}_t\to T$ is $\Gamma$-equivariant and its image is a point, any loop in $Y_t$ stabilises an edge in $T$. But the splitting corresponding to $T$ has trivial edge groups, so $Y_t$ is a finite forest.  Therefore, there is a vertex graph of groups $X_v$ with an edge $\epsilon$ whose midpoint is either an isolated vertex or a leaf in $Y_t$.  In the former case, deleting the midpoint of $\epsilon$ splits $\Gamma_v$ freely, relative to its incident edge groups.  In the latter case, there is a unique edge $e$ incident at $v$ with $\epsilon$ in the image of $X_e$, and only one edge of $X_e$ maps to $\epsilon$.  It follows that $\epsilon$ is non-separating in $X_v$---if it were separating, then every loop in $X_v$ would cross it an even number of times.  Therefore, $\epsilon$ corresponds to a basis element of $\Gamma_v$; because $\epsilon$ is crossed once by $X_e$ and is not crossed by the other incident edge spaces, there is a splitting of $\Gamma_v$ relative to the incident edge groups.
\end{proof}

\section{Whitehead graphs}

The aim of this section is to prove Theorem \ref{t: Local theorem}.  To do this, we first need to recall something about Whitehead's algorithm.   We will adopt the discs-in-handlebodies point of view---see \cite{berge_documentation_????} and \cite{manning_virtually_2010} for clear accounts.  For the combinatorial perspective on Whitehead's algorithm, see for instance \cite{lyndon_combinatorial_1977}.

Let $U$ be a handlebody of appropriate genus, and fix an identification $F\cong\pi_1(U)$.  The conjugacy classes of the elements of a multiword $\w$ naturally correspond to a 1-dimensional submanifold $N\subseteq U$, determined up to homotopy, in which each component is essential.   Any choice of basis $B$ for $F$ naturally corresponds to a maximal family of separating discs $\mathcal{D}\subseteq U$; when $U$ is cut along $\mathcal{D}$, the result is a 3-ball; the preimage of each disc is a pair of discs on the boundary of the ball, and the preimage of $N$ is a union of arcs in the ball, with their endpoints in the discs.  Crushing the discs to points, the resulting 1-complex embedded in the ball is the \emph{Whitehead graph} of $\w$, with respect to $B$.  It is convenient to remember the pairing on the vertices.  We will denote the Whitehead graph of a multiword $\w$ with respect to $B$ by $W_B(\w)$. We will often suppress all mention of $B$ when it causes no confusion.   We will call a multiword $\w$ \emph{minimal} if the basis $B$ is chosen to minimise its length.

\begin{figure}[htp]
\begin{center}
\includegraphics[width=0.6\textwidth]{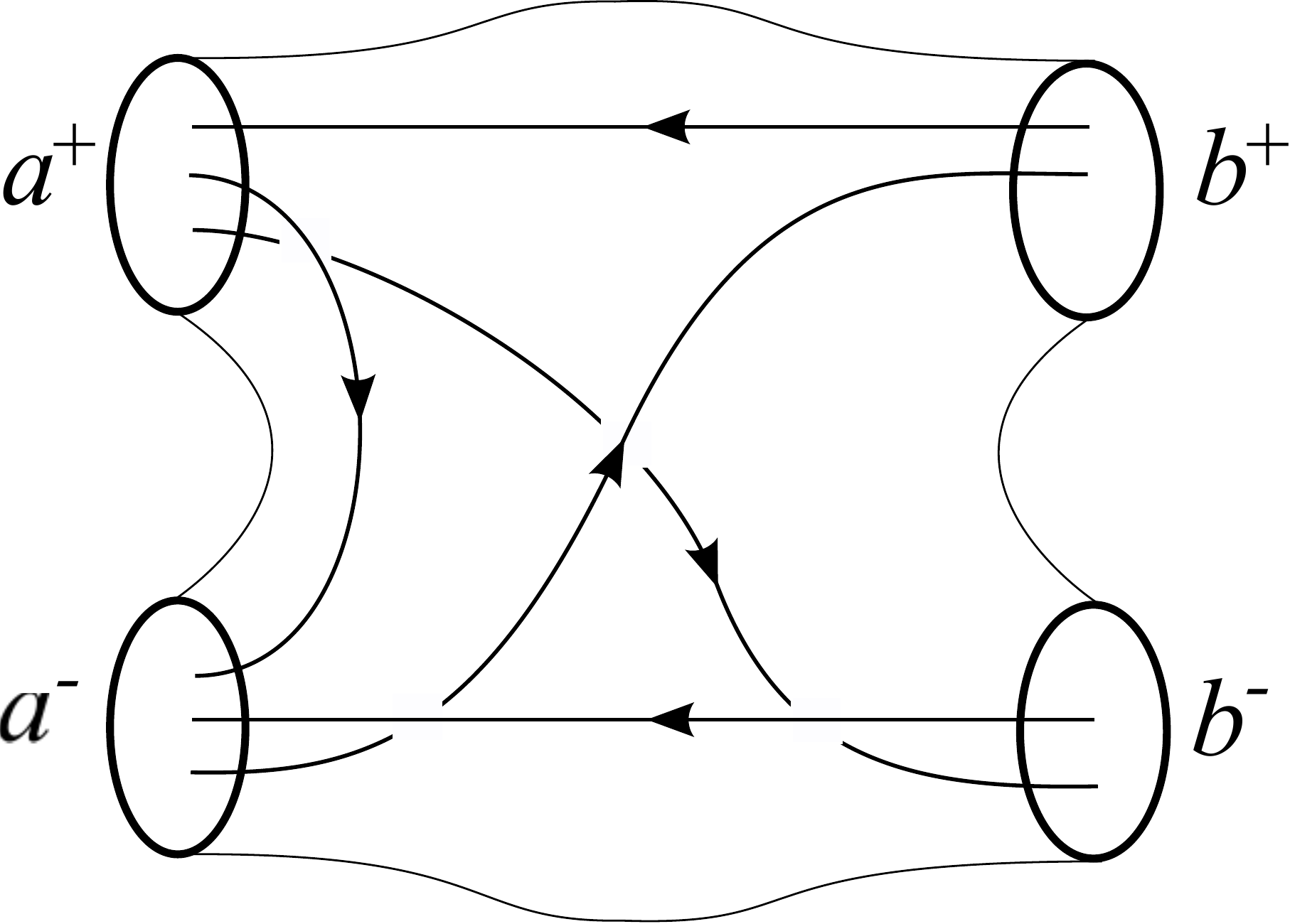}
\caption{The word $ba^{-1}b^{-1}a^2$  realised as a submanifold of a handlebody, which is cut along the discs that represent the standard generating set.}
\end{center}
\end{figure}

If there is a pair of vertices $\{v^+,v^-\}$ of $W_B(w)$ separated by a set $\mathcal{E}$ of edges of $W_B(\w)$, and the cardinality of $\mathcal{E}$ is strictly less than the valence of $v^+$ (which equals the valence of $v^-$) then a \emph{Whitehead move} can be applied, which yields a new basis $B'$ such that $W_{B'}(\w)$ has fewer edges than $W_B(\w)$.

For our purposes, Whitehead graphs will be useful for recognising free and cyclic splittings of $(F,[\w])$.  The following lemma is standard---see, for instance, \cite[Theorem 4.1]{cashen_line_2010}.

\begin{lemma}\label{l: Cut vertices}
If $W(\w)$ is disconnected then $(F,\w)$ is freely decomposable. Conversely, if $(F,\w)$ is freely decomposable then $W(\w)$ is either disconnected or contains a separating vertex.
\end{lemma}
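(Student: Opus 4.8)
The plan is to run both implications through the discs-in-handlebodies dictionary just set up. Fix $U$ with $F\cong\pi_1(U)$ and realise $\w$ as an essential $1$-submanifold $N\subseteq U$; the basic translation is that $(F,[\w])$ is freely decomposable exactly when $U$ contains a properly embedded essential (i.e.\ not boundary-parallel) disc $D$ with $D\cap N=\emptyset$, since cutting $U$ along such a $D$ exhibits a nontrivial free splitting of $F$ in which $\w$ is elliptic, and conversely any such splitting is carried by an essential disc off which $N$ can be homotoped. In both directions I would cut $U$ along the disc system $\mathcal{D}$ determined by $B$ to obtain the ball $Q$, so that the $2n$ vertices of $W(\w)$ are the discs on $\partial Q\cong S^2$ and the edges of $W(\w)$ are the arcs of $N\cap Q$; the one elementary fact used repeatedly is that a properly embedded disc in $Q$ whose boundary is a simple closed curve on $\partial Q$ cuts $Q$ into two balls.

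For the forward implication, suppose $V(W)=A\sqcup C$ with $A,C\neq\emptyset$ and no edge between the two parts. The transparent case is when this partition respects the pairing $v_i^{+}\leftrightarrow v_i^{-}$: then $B=B_A\sqcup B_C$ with both parts nonempty, and since no cyclically reduced word of $\w$ ever passes from a $B_A$-letter to a $B_C$-letter, each is conjugate into $\langle B_A\rangle$ or $\langle B_C\rangle$, so $F=\langle B_A\rangle*\langle B_C\rangle$ is a free splitting relative to $\w$ and we are done. In general the partition need not respect the pairing (for instance $\w=\{xy\}$ already fails this), but then one can apply a Whitehead move, which does not increase $|\w|$, and induct on $|\w|$ to reduce to the transparent case; geometrically, the corresponding change of disc system unclasps the arcs so that the circle separating the $A$-discs from the $C$-discs on $\partial Q$ bounds a disc in $Q$ disjoint from $N$, which becomes the required essential disc in $U$.

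For the converse, pick an essential disc $D\subseteq U$ with $D\cap N=\emptyset$, chosen so that $|D\cap\mathcal{D}|$ is minimal. If $D\cap\mathcal{D}=\emptyset$ then $D\subseteq Q$ and $\partial D$ misses all $2n$ discs, so the two balls into which $D$ splits $Q$ induce a partition of $V(W)$ with no edge across; both parts are nonempty, for otherwise $D$ would be boundary-parallel in $U$, so $W(\w)$ is disconnected. If $D\cap\mathcal{D}\neq\emptyset$, an outermost arc of $D\cap\mathcal{D}$ on $D$ cuts off a subdisc $D_0\subseteq D$ whose boundary meets exactly one of the $2n$ discs, say $D_i^{+}$; in $Q$ the disc $D_0$ is properly embedded, disjoint from $N$, and splits $Q$ into two balls, which exhibits $v_i^{+}$ as a separating vertex of $W(\w)$ --- minimality of $|D\cap\mathcal{D}|$ is precisely what prevents one of these balls from being trivial. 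The only genuine work, in both directions, is the general-position bookkeeping flagged here: in the forward direction, getting the separating disc honestly disjoint from $N$ (equivalently, the Whitehead-move reduction to a pairing-respecting disconnection), and in the converse, using minimal intersection with $\mathcal{D}$ and essentiality of $D$ to exclude the degenerate configurations. This is standard but fiddly, being exactly the content of the cited treatments of Whitehead's algorithm.
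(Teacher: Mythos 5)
The paper offers no proof of this lemma at all --- it is dismissed as standard with a citation to Cashen--Macura --- so there is nothing internal to compare against; what you have written is the standard discs-in-handlebodies argument (essentially Stallings's ``Whitehead graphs on handlebodies''), which is exactly the toolkit the surrounding section sets up. Your converse direction is sound as an outline: realise the free splitting rel $\w$ by an essential disc $D$ disjoint from $N$, minimise $|D\cap\mathcal{D}|$, and run the outermost-arc argument; the only details you suppress (removing circle components of $D\cap\mathcal{D}$, and the observation that an arc returning to the same vertex-disc would force $\w$ not to be cyclically reduced, so the ``empty side'' of an outermost subdisc really does contradict minimality) are genuinely routine.

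The one step that does not work as written is the induction in the forward direction. You propose: if the disconnection of $W(\w)$ does not respect the pairing $v^{+}\leftrightarrow v^{-}$, perform a Whitehead move and induct on $|\w|$. But the inductive hypothesis requires the \emph{new} Whitehead graph $W_{B'}(\w)$ to be disconnected, and nothing you have said guarantees that a Whitehead move preserves disconnectedness; the induction does not close up. (Reducing length and being separable are $\Aut(F)$-invariant notions, but ``disconnected in the current basis'' is not.) Fortunately the detour through Whitehead moves is unnecessary: with $N\cap Q$ realised as a trivial tangle (which is how the Whitehead graph is defined as a $1$-complex embedded in the ball), a circle on $\partial Q$ separating the $A$-discs from the $C$-discs already bounds a properly embedded disc $D\subseteq Q$ disjoint from $N$, whether or not the partition respects the pairing. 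The only case distinction needed is at the end: if the partition respects the pairing, $D$ is separating in $U$ and gives $F=\langle B_A\rangle\ast\langle B_C\rangle$ rel $\w$; if not, regluing some pair $D_a^{+},D_a^{-}$ lying on opposite sides makes $D$ non-separating in $U$, and cutting along it gives a splitting $F\cong F'\ast\mathbb{Z}$ with $N$ carried by the $F'$-handlebody, which is still a non-trivial free splitting relative to $\w$ (your own example $\w=\{xy\}$ is of this type: $F=\langle xy\rangle\ast\langle y\rangle$). Replace the induction by this two-case geometric argument and the proof is complete.
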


Note that if $W(\w)$ contains a separating vertex then $\w$ is not minimal.  Next, we need a condition to recognise rigid pairs.  The following result follows from work of Cashen \cite{cashen_splitting_2010}, but we shall give a more direct proof here.

\begin{lemma}\label{l: Separating pair of edges}
If $W(\w)$ contains a separating pair of edges then $(F,[\w])$ is either a thrice-punctured sphere or is not rigid.
\end{lemma}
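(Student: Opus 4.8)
The plan is to read the separating pair of edges as a properly embedded disc in $U$ meeting $N$ in two points, and then to fold this configuration into a cyclic splitting of $F$ in which all of $[\w]$ is elliptic---unless we are forced into the thrice-punctured sphere.

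First I would fix the handlebody picture: $F=\pi_1(U)$, with $\w$ realised by $N\subseteq U$, the basis $B$ by a disc system $\mathcal{D}$, and $P=U|_{\mathcal{D}}$ the $3$-ball containing $W(\w)$. A separating pair of edges $\{\epsilon_1,\epsilon_2\}$, which I may take to be a minimal edge-cut, partitions the vertices as $V_1\sqcup V_2$ with each $\epsilon_k$ running from $V_1$ to $V_2$. The first task is to realise this separation geometrically: to find a properly embedded disc in $P$ separating the arcs over $V_1$ from those over $V_2$, crossing the arcs $\alpha_1,\alpha_2$ underlying $\epsilon_1,\epsilon_2$ transversally once each, and otherwise disjoint both from $N$ and from the copies of the discs of $\mathcal{D}$ in $\partial P$. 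Re-gluing $\mathcal{D}$ turns it into a properly embedded disc $E\subseteq U$ with $\partial E\subseteq\partial U$ meeting $N$ transversally in exactly two points $p\in\alpha_1$ and $q\in\alpha_2$. After discarding an inessential $E$---which would permit an isotopy of $N$ shrinking the edge-cut---I may assume $E$ essential, so that cutting $U$ along $E$ exhibits a non-trivial free product decomposition $F=F_1*F_2$ if $E$ separates, or $F=F_0*\langle t\rangle$ with $t$ dual to $E$ if not. Every component of $N$ other than the one(s) through $p$ and $q$ is disjoint from $E$, hence carried by one of these factors, and so is elliptic in the corresponding splitting.

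The heart of the argument is a folding move on the component(s) of $N$ meeting $E$. Take first the case in which $p$ and $q$ lie on a single component $w_i$ and $E$ separates: after conjugation $w_i=uv$ with $u\in F_1\smallsetminus 1$ and $v\in F_2\smallsetminus 1$. Put $K:=\langle F_2,u\rangle$; one checks that $K\cong\langle u\rangle*F_2$, that $F_1\cap K=\langle u\rangle$, and that $F=F_1*_{\langle u\rangle}K$. This is a cyclic splitting in which $w_i=uv$ lies in the vertex group $K$, while every other $w_j$, being carried by $F_1$ or by $F_2\subseteq K$, is elliptic too; hence $(F,[\w])$ is not rigid, provided the splitting is non-trivial, i.e. $F_1\ne\langle u\rangle$ or $F_2\ne\langle v\rangle$. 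When $p,q$ lie on distinct components, or when $E$ is non-separating, $w_i$ either is already elliptic after re-choosing the stable letter of the HNN decomposition, or is made elliptic by a wholly analogous unfolding over a cyclic subgroup. I expect the fussiest point to be exactly this bookkeeping---verifying via Bass--Serre normal forms that each unfolded graph of groups is a genuinely non-trivial cyclic splitting of $F$, and handling the sub-case in which $w_i$ runs through $E$ twice in the same direction.

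Finally I would dispose of the degenerate situation, where no such unfolding is non-trivial. In the separating case this forces $F_1=\langle u\rangle$ and $F_2=\langle v\rangle$, so $F=\langle u\rangle*\langle v\rangle$ has rank two, $w_i\sim uv$, and every other component of $N$ is conjugate into $\langle u\rangle$ or $\langle v\rangle$; since $\langle w_j\rangle$ is maximal cyclic it must then be a conjugate of $\langle u\rangle$ or of $\langle v\rangle$. Thus $[\w]\subseteq\{\langle u\rangle,\langle v\rangle,\langle uv\rangle\}$ with $\langle uv\rangle\in[\w]$. If $[\w]$ is the full triple then $(F,[\w])\cong(\pi_1\Sigma,\partial\Sigma)$ for $\Sigma$ a thrice-punctured sphere; if it is a proper sub-collection, then among $\{u,v,uv\}$ there is still a basis of $F$ exhibiting a free splitting with every element of $[\w]$ elliptic (for instance $\langle u\rangle*\langle uv\rangle$ when $\langle v\rangle\notin[\w]$), so $(F,[\w])$ is again not rigid; the non-separating degenerate cases collapse similarly to small-rank, not-one-ended pairs. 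The real obstacle, then, is twofold: realising the combinatorial edge-cut by an embedded disc in $P$, and arranging the case analysis so that the thrice-punctured sphere is precisely the single irreducible leftover.
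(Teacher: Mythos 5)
Your strategy is sound and the main case is argued correctly, but you take a genuinely different route from the paper, and you leave a real portion of the work in the ``wholly analogous'' bin. The paper does not cut along the disc at all: it takes a regular neighbourhood $V$ of $D\cup w_i\cup w_j$, notes $\pi_1(V)\cong\langle w_i\rangle*\langle w_j\rangle$, and applies Seifert--van~Kampen to the decomposition $U=V\cup V'$; the frontier surface is a twice-punctured torus whose boundary curves and meridian die in $\pi_1(U)$, so the resulting splitting has cyclic edge group generated by $w_iw_j$, with $w_i,w_j$ elliptic in the $V$-side vertex group and all other $w_k$ elliptic in $V'$. You instead first pass to the free splitting $F_1*F_2$ (or $F_0*\langle t\rangle$) obtained by cutting along $E$, and then unfold it algebraically over $\langle u\rangle$ via $F\cong F_1*_{\langle u\rangle}(\langle u\rangle *F_2)$; note that your edge group is a ``half'' of $w_i$, not $\langle w_iw_j\rangle$, so these really are different splittings. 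Your unfolding identities ($K\cong\langle u\rangle*F_2$, $F_1\cap K=\langle u\rangle$, $F=F_1*_{\langle u\rangle}K$) are correct by normal forms, the non-triviality criterion is right, and your degenerate analysis (rank two, $[\w]\subseteq\{[\langle u\rangle],[\langle v\rangle],[\langle uv\rangle]\}$, thrice-punctured sphere or freely decomposable) matches the paper's. What the paper's approach buys is uniformity: $\pi_1(V)$ is always free of rank two and the frontier always produces a cyclic edge group, so the case division is only over the topology of $\Sigma$ and whether $w_i=w_j$. Your approach buys algebraic transparency in the separating case at the cost of a separate unfolding computation for each configuration.

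The one place where you owe more than an assertion of analogy is the non-separating case, and it is not a fringe case: if $p$ and $q$ lie on \emph{distinct} components then each crosses $E$ once, so by a parity argument $E$ cannot separate $U$ --- i.e.\ the configuration the paper treats as its worked example falls entirely into the cases you defer. These do go through (e.g.\ for $w_i\sim ta$, $w_j\sim tb$ set $s=ta$ and $c=a^{-1}b$, giving $F=F_0*_{\langle c\rangle}(\langle c\rangle*\langle s\rangle)$ with $w_i=s$ and $w_j=sc$ elliptic; the same-sign double crossing $w_i\sim tatb$ reduces after the substitution $s=ta$ to $s^2a^{-1}b$ and the same unfolding), together with their own degenerate and $c=1$ subcases, but you should carry out at least one of them. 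Two smaller points: you should not need to ``discard an inessential $E$'' --- since a genuine edge cut separates vertices of $W(\w)$, and vertices correspond to essential discs of $\mathcal{D}$, the disc $E$ is automatically essential; and you assert $u,v\neq 1$ without comment --- if either is trivial then $w_i$ is elliptic in the free splitting $F_1*F_2$ and $(F,[\w])$ is freely decomposable, which should be logged as an exit from the argument rather than silently excluded.
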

\begin{proof}
As above, we realise $\w$ by an embedded 1-dimensional submanifold in a handlebody $U$.  We will abuse notation and denote by $w_i$ the component of the submanifold corresponding to the element $w_i\in\w$.   Let $w_i,w_j\in \w$ be the (not necessarily distinct) components of $\w$ that contain the separating pair of edges.   By hypothesis, there exists a separating disc $D$ properly embedded in $U$ such that $w_i\cup w_j$ intersects $D$ in exactly two points and every other $w_k$ is disjoint from $D$.  Let $V$ be a small closed neighbourhood of $D\cup w_i\cup w_j$, let $\Sigma$ be the boundary of $V$ in $U$ and let $V'$ be the complement of the interior of $V$.

Fix a basepoint in $\Sigma$.  There are various cases to consider, according to whether or not $w_i$ and $w_j$ are distinct and whether or not $\Sigma$ is connected.  We will assume that they are distinct and that $\Sigma$ is connected, and leave the remaining cases to the reader.

By construction, $\pi_1(V)\cong \langle w_i\rangle *\langle w_j\rangle$ (choosing representatives of conjugacy classes appropriately).  The surface $\Sigma$ is homeomorphic to a twice-punctured torus, and its fundamental group can be presented as
\[
\pi_1(\Sigma)\cong\langle  a,b,d_1,d_2\mid [a,b]d_1d_2\rangle
\]
where $a=w_iw_j$, $b$ is a meridian about $w_i$, and $d_1$ and $d_2$ are freely homotopic to the boundary components of $\Sigma$.   The Seifert--van Kampen Theorem implies that $F\cong \pi_1(U)$ is a pushout.

\[\xymatrix{
     \pi_1(\Sigma)\ar@{>}[r]\ar@{>}[d] & \pi_1(V)\ar@{>}[d] \\
    \pi_1(V')\ar@{>}[r] & \pi_1(U)
}\]
Factoring out the kernels of the maps to $\pi_1(U)$, we obtain a decomposition of $\pi_1(U)$ as an amalgamated free product.  Because $b$, $d_1$ and $d_2$ are all null-homotopic in $U$, whereas $a$ survives, the edge group is cyclic.

If this cyclic splitting is trivial then necessarily $w_i$ and $w_j$ generate $F$, and then either $(F,[\w])$ is freely decomposable or
\[
[\w]=[\{w_i,w_j,w_iw_j\}]~,
\]
in which case $(F,[\w])$ is a thrice-punctured sphere.
\end{proof}

\begin{lemma}\label{l: No separating vertex-edge pair}
If $(F,[\w])$ is rigid and $W(\w)$ contains a vertex $v$ and an edge $e$ such that $W(\w)\smallsetminus \{v,e\}$ is disconnected then either $(F,[\w])$ is a thrice-punctured sphere or $\w$ is not minimal.
\end{lemma}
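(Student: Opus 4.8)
The plan is to reduce the vertex-edge case to the pair-of-edges case already handled in Lemma~\ref{l: Separating pair of edges}. The hypothesis gives a vertex $v$ and an edge $e$ of $W(\w)$ with $W(\w)\smallsetminus\{v,e\}$ disconnected. First I would recall that the vertices of the Whitehead graph come in paired antipodal pairs $\{v^+,v^-\}$, corresponding to the two crushed discs obtained from one disc $D_v$ of the basis family $\mathcal{D}$. The key geometric observation is that removing the \emph{vertex} $v$ from $W(\w)$ is exactly the combinatorial shadow of band-summing along the disc $D_v$: passing from the basis $B$ to a new basis $B'$ via an elementary Nielsen transformation that folds the strand ends at $v^+$ across to $v^-$. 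Under such a move, what was a separating vertex $v$ plus a separating edge $e$ becomes a configuration in which the images of the two strands through $v^+$ (or through $v^-$) together with $e$ form a separating set consisting only of edges.

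Concretely, I would argue as follows. If $\w$ is minimal we are done by contradiction; so assume $\w$ is minimal and derive that $(F,[\w])$ is a thrice-punctured sphere. Minimality rules out a separating \emph{vertex} of $W(\w)$ by the remark after Lemma~\ref{l: Cut vertices}, so $v$ alone does not disconnect; hence $e$ must be one of the edges whose removal, together with $v$, is needed. Consider the valence $k$ of $v^+$. If $k=1$, then $e$ is forced to be the unique edge at $v^+$ (or $v^-$), and then $\{v,e\}$ disconnecting $W(\w)$ forces $\{e\}\cup\{\text{edge at }v^-\}$ --- a pair of edges --- to disconnect $W(\w)$, so Lemma~\ref{l: Separating pair of edges} applies directly and $(F,[\w])$ is a thrice-punctured sphere or not rigid; rigidity then forces the thrice-punctured sphere. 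If $k\ge 2$, I would apply a Whitehead move centred at $v$: since $W(\w)\smallsetminus\{v\}$ is connected but $W(\w)\smallsetminus\{v,e\}$ is not, the $k$ edges at $v^+$ are separated into a nonempty set on each side of $e$ by the cut, which is precisely a separation of the vertex $v^+$ by a set of fewer than $k$ edges; this is exactly the configuration that makes a Whitehead move available and strictly shortens $\w$, contradicting minimality unless the move is length-preserving. Tracking the length-preserving (and hence, after the move, still-valid) case, the resulting Whitehead graph $W_{B'}(\w)$ has a separating pair of edges, so Lemma~\ref{l: Separating pair of edges} again yields the dichotomy, and rigidity again forces the thrice-punctured sphere.

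The main obstacle I anticipate is bookkeeping the disconnectedness across the Nielsen/Whitehead move: one must verify that the separation by $\{v,e\}$ in $W_B(\w)$ genuinely produces a separation by a bounded set of \emph{edges} in $W_{B'}(\w)$, and in particular that the move does not inadvertently reconnect the graph through the paired antipode $v^-$. Handling the antipodal vertex correctly --- the fact that $v$ as a vertex of the Whitehead graph really means the pair $\{v^+,v^-\}$, and that a cut of $W(\w)\smallsetminus\{v,e\}$ must be reconciled on both halves $v^\pm$ --- is where the case analysis lives, much as in Lemma~\ref{l: Separating pair of edges} where several cases (distinctness of $w_i,w_j$, connectedness of $\Sigma$) were left to the reader. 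The cases where the two strands at $v$ belong to the same component $w_i$ versus distinct components $w_i,w_j$, and where the auxiliary surface is connected or not, should be dispatched in parallel with the corresponding cases there. Once the reduction to a separating pair of edges is in place, the conclusion is immediate from Lemma~\ref{l: Separating pair of edges} together with the hypothesis that $(F,[\w])$ is rigid, which excludes the ``not rigid'' alternative and leaves only the thrice-punctured sphere.
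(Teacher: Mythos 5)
Your overall strategy --- either a Whitehead move applies, so $\w$ is not minimal, or the configuration degenerates to a separating pair of edges and Lemma~\ref{l: Separating pair of edges} together with rigidity forces the thrice-punctured sphere --- is the same as the paper's. But the execution has a genuine gap at exactly the point where the two alternatives separate.

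The paper's argument runs: let $A,B$ be the components of $W(\w)\smallsetminus\{v,e\}$, normalised so that $u$, the pair of $v$, lies in $A$, and let $E_A,E_B$ be the sets of edges joining $v$ to $A$ and to $B$. Then $E_A\cup\{e\}$ separates $v$ from $u$, so a Whitehead move applies --- and $\w$ is not minimal --- \emph{unless} $|E_A\cup\{e\}|$ equals $\val(v)=|E_A|+|E_B|$, i.e.\ unless $E_B$ is a single edge $f$; and in that exceptional case $\{e,f\}$ is already a separating pair of edges in $W_B(\w)$ itself, so Lemma~\ref{l: Separating pair of edges} applies with no change of basis at all. Your write-up misses this dichotomy in two ways. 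First, you assert that the cut gives ``a separation of the vertex $v^+$ by a set of fewer than $k$ edges''; the separating set has cardinality $|E_A|+1$, which fails to be less than $k=|E_A|+|E_B|$ precisely when $|E_B|=1$ --- and that is the thrice-punctured-sphere case, so the claim cannot hold uniformly and, taken at face value, it would ``prove'' that the exceptional case never occurs. Second, you patch this by invoking a ``length-preserving'' Whitehead move and asserting that \emph{after} such a move the new graph $W_{B'}(\w)$ contains a separating pair of edges; but in the paper's framework a Whitehead move is only licensed when the separating set is strictly smaller than the valence, and whenever it is licensed it strictly shortens $\w$ --- there is no length-preserving case to track, and the separating pair of edges you need lives in the \emph{original} Whitehead graph, not in a transformed one. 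Finally, you never verify that your separating set of edges actually separates $v$ from its paired vertex $u$, which is what the move requires (you flag this issue in your last paragraph but defer it); this is exactly what the normalisation $u\in A$ accomplishes, and without it the move is not available. The $k=1$ case as you state it is also inconsistent with minimality (if $e$ is the unique edge at $v$ then $W(\w)\smallsetminus\{v,e\}=W(\w)\smallsetminus\{v\}$, and a cut vertex already contradicts minimality), so it should be absorbed into the general count rather than treated separately.
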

\begin{proof}
Let $A,B$ be the components of $W(\w)\smallsetminus \{v,e\}$.  Without loss of generality, assume that $u$, the pair of $v$, is contained in $A$.  Let $E_A$ and $E_B$ be the sets of edges that joins $v$ to $A$ and $B$ respectively.  Then $E_A\cup\{e\}$ is a set of edges that separates $v$ from $u$, so one can perform a Whitehead move, and hence $\w$ is not minimal, unless the valence of $v$ is equal to the cardinality of $E_A\cup\{e\}$.  But this is true if and only if $E_B$ is a single edge $f$, in which case $\{e,f\}$ is a separating pair of edges.
\end{proof}

These conditions are certainly not exhaustive.  There are cyclically decomposable pairs for which the minimal Whitehead graph does not contain a cut pair of edges.

Next, we need to understand the Whitehead graphs of pullbacks.  The key operation is \emph{splicing}, which was introduced by Manning \cite{manning_virtually_2010}.

\begin{definition}
Let $A,B$ be finite graphs.  Let $u$ be a vertex of $A$ and $v$ a vertex of $B$, and assume that the valences of $u$ and $v$ are equal.  Fix a bijection $f$ between the edges of $A$ incident at $u$ and the edges of $B$ incident at $v$.  Construct a new graph $C$ as follows: delete the vertices $u$ and $v$ from $A$ and $B$, leaving the incident edges `hanging'; then glue the resulting hanging edges of $A$ and $B$ together according to the bijection $f$.  The graph $C$ is said to be obtained from $A$ and $B$ by \emph{splicing}.
\end{definition}

The construction of the Whitehead graph of a pullback can be summarised in the following lemma.  An example is illustrated in Figure \ref{fig: Splice}.

\begin{lemma}[Manning \cite{manning_virtually_2010}]
If $\widehat{F}\subseteq F$ is a subgroup of finite index and $\hat{\w}$ is the pullback of $\w$ to $\widehat{F}$ then $W(\hat{\w})$ is obtained by splicing $|F:\widehat{F}|$ copies of $W(\w)$.  (The choices made when splicing correspond exactly to the choice of basis for $\widehat{F}$.) 
\end{lemma}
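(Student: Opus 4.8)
The plan is to argue entirely in the discs-in-handlebodies picture. Realise $F=\pi_1(U)$ for a handlebody $U$, realise $\w$ by an essential $1$-submanifold $N\subseteq U$, and realise the basis $B$ by a complete system of discs $\mathcal{D}$, so that cutting $U$ along $\mathcal{D}$ produces a ball $U\,|\,\mathcal{D}$ with $2\,\mathrm{genus}(U)$ boundary discs and containing the arc system $N\,|\,\mathcal{D}$; crushing each boundary disc to a point yields $W_B(\w)$. Let $\widehat{U}\to U$ be the finite cover corresponding to $\widehat{F}$, and let $\widehat{N}$ be the full preimage of $N$, which realises $\hat{\w}$. The strategy has three steps: first, pull $\mathcal{D}$ back to a disc system $\mathcal{D}'$ in $\widehat{U}$, which over-cuts $\widehat{U}$; second, keep only the discs of $\mathcal{D}'$ lying outside a spanning tree of a suitable dual graph, obtaining a complete disc system $\widehat{\mathcal{D}}$ in $\widehat{U}$, i.e.\ a basis for $\widehat{F}$; third, check that the Whitehead graph $W(\hat{\w})$ built from $\widehat{\mathcal{D}}$ is exactly a splice of copies of $W_B(\w)$.

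For the first two steps I would regard $U$ as a graph of spaces with a single vertex space $U\,|\,\mathcal{D}\cong B^3$ and edge spaces the discs of $\mathcal{D}$, and apply the covering theory of Section~\ref{s: Covering theory}. Since $B^3$ is contractible, its elevations to $\widehat{U}$ are $n:=|F:\widehat{F}|$ homeomorphic copies of $B^3$, and these are precisely the components into which $\mathcal{D}'$ cuts $\widehat{U}$; moreover $\widehat{N}\,|\,\mathcal{D}'$ restricts on each of them to a homeomorphic copy of $N\,|\,\mathcal{D}$, so crushing the boundary discs of each gives a copy of $W_B(\w)$. Let $\Theta$ be the dual graph of this decomposition of $\widehat{U}$: its $n$ vertices are the balls and its edges are the discs of $\mathcal{D}'$. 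As $\widehat{U}$ is connected, so is $\Theta$; fix a spanning tree $\Theta_0\subseteq\Theta$, with $n-1$ edges, and let $\widehat{\mathcal{D}}$ be the sub-system of $\mathcal{D}'$ consisting of the discs corresponding to edges of $\Theta$ not in $\Theta_0$. Cutting $\widehat{U}$ along $\widehat{\mathcal{D}}$ is the same as gluing the $n$ balls along the discs of $\Theta_0$, and gluing finitely many $3$-balls along boundary discs in a tree pattern yields a single $3$-ball; hence $\widehat{U}\,|\,\widehat{\mathcal{D}}$ is a ball, so $\widehat{\mathcal{D}}$ is a complete disc system and defines a basis of $\widehat{F}$. (An Euler characteristic count, or comparing $\mathrm{rank}\,H_1(\Theta)$ with $\mathrm{genus}(\widehat{U})$, confirms that $\widehat{\mathcal{D}}$ has the correct number of discs.)

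For the third step, $W(\hat{\w})$ with respect to this basis is obtained by crushing the boundary discs of $\widehat{U}\,|\,\widehat{\mathcal{D}}$, i.e.\ of the union of the $n$ copies of $B^3$ glued along the $n-1$ discs of $\Theta_0$. So it is enough to check that gluing two of these balls along a single disc $\widehat{D}$ (necessarily a lift of some $D_i\in\mathcal{D}$) and then crushing is exactly one splicing move on the two copies of $W_B(\w)$. Indeed, $\widehat{D}$ meets one ball as a copy of $D_i^+$ and the other as a copy of $D_i^-$, and in the two copies of $W_B(\w)$ these become the crushed vertices $v_i^+$ and $v_i^-$; gluing along $\widehat{D}$ rather than cutting deletes these two vertices and instead joins each arc of $\widehat{N}$ meeting $\widehat{D}$ to its continuation on the other side. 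Since the arc-ends on the two sides of $\widehat{D}$ are canonically matched up by the points of $\widehat{N}\cap\widehat{D}$, this is precisely the splicing of the two copies of $W_B(\w)$ at $v_i^+$ and $v_i^-$; and the valence condition in the definition of splicing holds because $\mathrm{val}(v_i^+)=\mathrm{val}(v_i^-)=|N\cap D_i|$ in $W_B(\w)$, which is exactly what makes the two sets of arc-ends match up. Performing this move once for each of the $n-1$ edges of $\Theta_0$ exhibits $W(\hat{\w})$ as a splice of $n=|F:\widehat{F}|$ copies of $W_B(\w)$. For the parenthetical remark, varying the spanning tree $\Theta_0$ (and, allowing handle slides, the complete disc system $\widehat{\mathcal{D}}$ in general) runs over the possible bases of $\widehat{F}$.

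The routine parts are the covering-space bookkeeping of the first two steps. The one point that needs genuine care is the identification in the third step of the topological operation ``glue two balls along a boundary disc, then crush'' with the combinatorial definition of splicing — in particular, getting the bijection between hanging half-edges right and verifying the equal-valence hypothesis — but this is exactly the lift of the already-noted fact that $v_i^+$ and $v_i^-$ have equal valence in $W_B(\w)$.
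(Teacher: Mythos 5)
Your argument is correct and is exactly the one the paper has in mind: the lemma is stated without proof (citing Manning), but the paper's Figure~\ref{fig: Splice} (``deleting the grey disc corresponds to splicing'') is precisely your third step, and your spanning-tree selection of a complete disc system from the pulled-back system $\mathcal{D}'$ is the standard way to make that picture precise. The points you treat briefly --- that a lifted disc with both sides on the same ball is a loop in the dual graph and hence automatically lands in $\widehat{\mathcal{D}}$, and that the valence hypothesis for splicing holds because the covering restricts to a homeomorphism on each lifted disc --- are handled correctly.
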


\begin{figure}[htp]
\begin{center}
\includegraphics[width=0.8\textwidth]{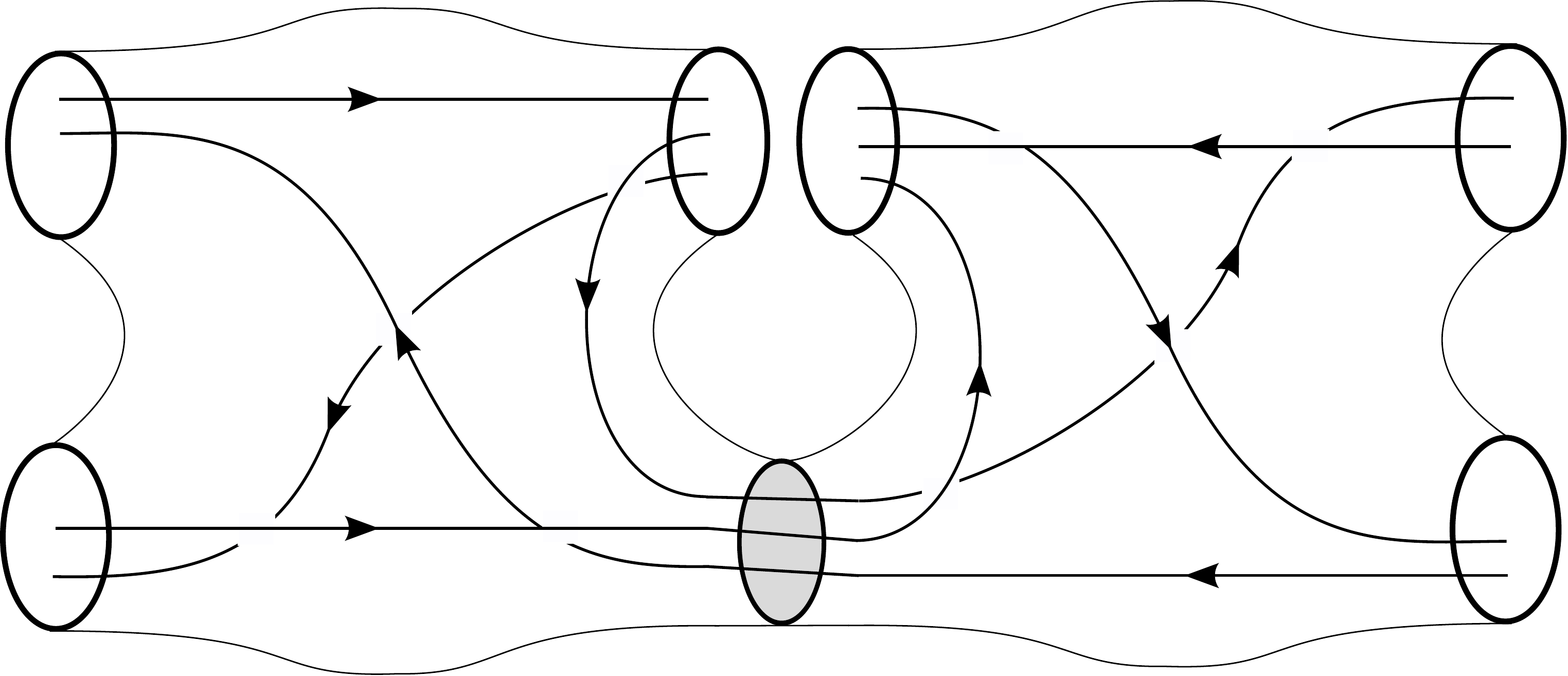}
\caption{The lift of $ba^{-1}b^{-1}a^2$  to a cover of degree two.  Deleting the grey disc corresponds to splicing.}\label{fig: Splice}  
\end{center}
\end{figure}

\begin{remark}
The result of splicing two connected graphs without cut vertices is another connected graph without cut vertices.  The proof of this is left as an exercise to the reader.
\end{remark}

  The final piece of the argument is the notion of a clean cover (or, equivalently, a clean subgroup), used extensively by Wise.

\begin{definition}\label{d: Clean}
Fix a basis $B$ for $F$ and consider a pair $(F,[\w])$.  Let $X$ be the rose graph, oriented and labelled, with the identification $F\cong\pi_1(X)$ determined by the choice of basis $B$.  Consider a subgroup $\widehat{F}$ of finite index in $F$, and the corresponding covering map $\widehat{X}\to X$.  The subgroup $\widehat{F}$ is called \emph{clean} if the pullback map
\[
\hat{\w}:\coprod_i S^1\to\widehat{X}
\]
is injective on each connected component.
\end{definition}

In the handlebody picture, the subgroup $\widehat{F}$ is clean if and only if every component of the preimage of $\w$ intersects each ball in at most one arc.  See Figure \ref{fig: Clean}.

\begin{figure}[htp]
\begin{center}
\includegraphics[width=0.5\textwidth]{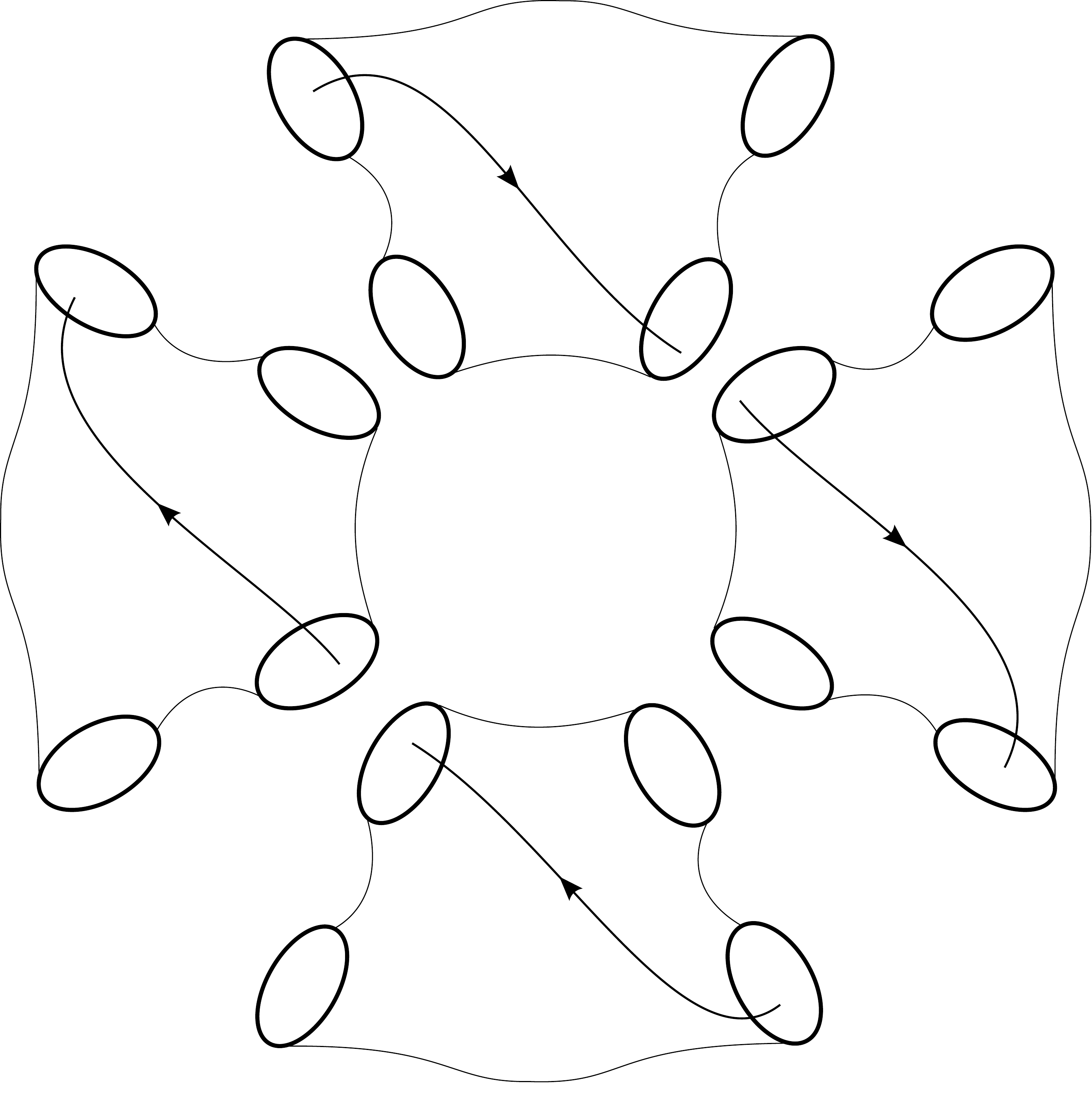}
\caption{One component of the pullback of a word to a clean cover.}  \label{fig: Clean}
\end{center}
\end{figure}

A standard application of Marshall Hall's Theorem shows that clean subgroups are plentiful.

\begin{lemma}
If $\overline{F}$ is any subgroup of finite index in $F$ then there is a clean subgroup $\widehat{F}\subseteq \overline{F}$.
\end{lemma}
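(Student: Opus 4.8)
The plan is to invoke Marshall Hall's Theorem \cite{hall_jr_subgroups_1949}, in the topological form ``an immersion of finite graphs extends to a finite-sheeted covering in which the domain is a subgraph'', in two steps: first to unfold each element of $\w$ into an embedded circle inside some finite cover, and then to pass to a normal cover, where transitivity of the deck group on elevations promotes this to cleanness of \emph{all} the elevations at once. We may assume $\w=\{w_1,\dots,w_r\}$ is finite---the only case used in this paper---and, after replacing each $w_i$ by its cyclically reduced form, that each $w_i$ labels an immersed loop in the rose $X$ that determines $F$.

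First I would unfold the loops. For each $i$ let $\tilde C_i$ be the circle subdivided into $|w_i|$ edges together with the obvious combinatorial map to $X$ reading the word $w_i$; since $w_i$ is cyclically reduced this map is an immersion, though typically not injective. Applying Marshall Hall's Theorem to the immersion $\coprod_{i=1}^{r}\tilde C_i\to X$ produces a finite-sheeted covering $X'\to X$ extending it, with $\coprod_i\tilde C_i$ a subgraph of $X'$; in particular each $\tilde C_i$ is an \emph{embedded} circle in $X'$, and it is a whole component of the preimage of the loop $w_i$ in $X'$.

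Next I would bring in $\overline F$ and normalise. Let $\widehat F$ be the normal core in $F$ of $\pi_1(X')\cap\overline F$: a finite-index normal subgroup of $F$ with $\widehat F\subseteq\overline F$, with associated covering $\widehat X\to X$ that is finite, normal, and factors through both $X'\to X$ and $\overline X\to X$. I claim $\widehat F$ is clean, i.e.\ every component of the preimage of $\w$ in $\widehat X$ is embedded. Let $C$ be such a component; it lies over the loop $w_i$ for a well-defined $i$. Because $\widehat X\to X$ factors through $X'$ and $\tilde C_i$ is an embedded circle of $X'$ lying over $w_i$, its preimage in $\widehat X$ is a disjoint union of embedded circles, each a component of the preimage of $\w$ lying over $w_i$; so at least one such component is embedded. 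On the other hand, the components of the preimage of $\w$ lying over $w_i$ correspond to the double cosets $\widehat F\backslash F/\langle w_i\rangle$, on which the deck group $F/\widehat F$ acts by left translation---a well-defined action because $\widehat F$ is normal, and manifestly transitive. Hence some deck transformation carries an embedded component onto $C$, so $C$ is embedded too.

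The incidental points---that the unfolding map is an immersion, and the bookkeeping among $X$, $\overline X$, $X'$ and $\widehat X$---are routine. The step that carries the idea, and hence the main obstacle, is the last one: Marshall Hall's Theorem only guarantees embeddedness of the \emph{particular} elevations $\tilde C_i$ that one builds by hand, so some device is needed to control the remaining elevations, and here it is the combination of normality with the transitivity of the deck action on the elevations of a fixed loop.
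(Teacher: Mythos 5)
Your proof is correct and follows essentially the same route as the paper's: apply Marshall Hall's Theorem to embed each $w_i$ in a finite-sheeted cover, intersect with $\overline{F}$, and pass to the normal core so that the deck group acts transitively on the elevations of each $w_i$, promoting one embedded elevation to all of them. The paper states this very tersely (one subgroup $F_i$ per word rather than a single cover $X'$, and the transitivity argument left implicit), so your write-up is just a more detailed version of the same argument.
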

\begin{proof}
For each element $w_i$ of $\w$, Marshall Hall's Theorem provides a finite-index subgroup $F_i\subseteq F$ such that, if $X_i\to X$ is the corresponding finite-sheeted cover, the map $w_i:S^1\to X$ lifts to an embedding $S^1\hookrightarrow X_i$.  The normal core of the subgroup
\[
\overline{F}\cap\bigcap_i F_i
\]
is the required clean subgroup of $F$.
\end{proof}

\begin{proof}[Proof of Theorem \ref{t: Local theorem}]
Choose a basis $B$ for $F$ so that $\w$ is minimal.  By Lemma \ref{l: No separating vertex-edge pair} and the hypothesis that $(F,\w)$ is rigid and not a thrice-punctured sphere, the Whitehead graph $W(\w)$ does not contain a separating vertex-edge pair.  As described above, the Whitehead graph $W(\hat{\w})$ is obtained by splicing together various copies $W_j$ of $W(\w)$.  Because $\widehat{F}$ is clean, $\hat{w}_i$ intersects each $W_j$ in at most one edge.  For each $j$, let $W'_j= W_j\smallsetminus \hat{w}_i$; note that each $W'_j$ is connected and does not have a cut vertex.  The Whitehead graph $W(\hat{\w}\smallsetminus\{\hat{w}_i\})$ is obtained by splicing together the $W'_j$, and therefore is also connected with no cut vertices.  In particular, $(\widehat{F},[\w\smallsetminus\{\hat{w}_i\}])$ is one-ended by Lemma \ref{l: Cut vertices}. 
\end{proof}

\section{The JSJ decomposition}

To prove the main theorem, we will use the (cyclic) JSJ decomposition of $\Gamma$.  JSJ decompositions of groups come in many different versions; \cite{guirardel_jsj_2009} contains a useful summary and a unifying perspective.  To be specific, we will use Bowditch's JSJ decomposition \cite{bowditch_cut_1998}, which is only defined for hyperbolic groups but has the advantage of being canonical.   We summarise its properties in the case of interest to us in the following result.

\begin{theorem}[Bowditch \cite{bowditch_cut_1998}]
Let $G$ be any one-ended, torsion-free, hyperbolic group.   There is a reduced graph of groups, with fundamental group $G$, with the following properties.
\begin{enumerate}
\item Each edge group is cyclic.
\item Each vertex group is of one of three sorts.
\begin{enumerate}
\item Cyclic vertex groups are cyclic.
\item Surface vertex groups are isomorphic to the fundamental groups of compact surfaces.  The incident edge groups are identified with (powers of) boundary components.
\item Rigid vertex groups do not split over any cyclic subgroup relative to the incident edge groups.
\end{enumerate}
Furthermore, one end of every edge adjoins a cyclic vertex.
\end{enumerate}
This graph of groups is called the \emph{JSJ decomposition} of $G$. 
\end{theorem}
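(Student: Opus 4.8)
The statement is Bowditch's \cite{bowditch_cut_1998}, so the plan is to recall his construction from the topology of the Gromov boundary rather than reprove it from scratch; the only points needing separate comment in the torsion-free setting are that every two-ended subgroup of $G$ is then infinite cyclic and that every Fuchsian subgroup arising below is the fundamental group of a compact surface. Since $G$ is one-ended, $M:=\partial G$ is a compact, connected, metrisable space carrying a geometric convergence action of $G$. First I would analyse the \emph{cut pairs} of $M$, that is, unordered pairs $\{a,b\}$ with $M\smallsetminus\{a,b\}$ disconnected, together with their $G$-stabilisers; one-endedness together with the dynamics of the action rules out genuine cut points and forces each such stabiliser to be two-ended.

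Next I would organise the cut pairs, and the inseparable pieces of $M$ that they cut off, into a betweenness/pretree structure in the sense of Bowditch. The key technical step, and the one I expect to be the main obstacle, is to show that this pretree is sufficiently discrete to arise as the vertex set of a genuine simplicial $G$-tree $T$ with $T/G$ a finite graph of groups; this is the heart of \cite{bowditch_cut_1998} and rests on the convergence dynamics on $M$, and it is what makes the resulting decomposition both finite and canonical. Granting $T$, the edge stabilisers are stabilisers of cut pairs, hence two-ended, hence infinite cyclic, which gives property (1).

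Then I would classify the vertex stabilisers according to the local structure of $M$ at the corresponding point of the pretree. A vertex coming directly from a cut pair has two-ended, hence cyclic, stabiliser; these are the cyclic vertices. A vertex whose attached piece of $M$ is a ``necklace'' carries a Fuchsian stabiliser whose peripheral subgroups are precisely the incident edge groups, raised to the powers by which the corresponding boundary curves wrap, so torsion-freeness promotes ``Fuchsian'' to $\pi_1$ of a compact surface and yields (2)(b) with the asserted identification of edge groups with powers of boundary components. Any remaining vertex stabiliser admits no cyclic splitting relative to its incident edge groups, since such a splitting would refine $T$ by creating a new cut pair of $M$, contrary to the construction; this is the rigid case (2)(c), and the impossibility of such a refinement is exactly the canonicity that distinguishes the JSJ decomposition from an arbitrary cyclic splitting.

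Finally, the alternating property --- one end of every edge adjoins a cyclic vertex --- is forced by the bipartite shape of the construction: the cut-pair vertices form one side, the inseparable blocks (cyclic, surface, or rigid) the other, and every edge of $T$ runs between the two sides. Passing to the reduced graph of groups only collapses edges along which one of the inclusions is onto, which can at worst absorb a cut-pair vertex into an adjacent cyclic block, so every surviving edge retains a cyclic endpoint. Since a fully self-contained argument would essentially reproduce \cite{bowditch_cut_1998}, I would in practice cite it and check only the two torsion-free reductions noted at the outset.
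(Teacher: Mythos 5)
The paper offers no proof of this statement---it is quoted as a summary of Bowditch's theorem with a citation to \cite{bowditch_cut_1998}---and your proposal likewise defers to that reference, adding a faithful high-level sketch of the cut-pair/pretree construction plus the two torsion-free reductions (two-ended implies infinite cyclic, Fuchsian implies compact surface group). This matches the paper's treatment, so there is nothing to fault.
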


Of course, in the above definition there is some ambiguity, as any vertex corresponding to a thrice-punctured sphere can be thought of as either of surface or rigid type.  We shall always think of them as being of surface type.

\begin{remark}
If $\Gamma$ is the fundamental group of a graph of free groups with cyclic edge groups then the JSJ decomposition of $\Gamma$ is trivial if and only if $\Gamma$ is the fundamental group of a surface.
\end{remark}

First, we shall deal with the case in which there are no rigid vertices.  The proof of the following lemma is similar to the proof of \cite[Lemma 22]{gordon_surface_2010}. 

\begin{lemma}\label{l: No rigid vertices}
Suppose the JSJ decomposition of $\Gamma$ has no rigid vertices.  Then $\Gamma$ has a surface subgroup.  In particular, $\Gamma$ either has a one-ended subgroup of infinite index or $\Gamma$ is a surface group.
\end{lemma}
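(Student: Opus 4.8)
The plan is to show that $\Gamma$ has a surface subgroup, that is, a subgroup isomorphic to the fundamental group of a closed surface of negative Euler characteristic; the final clause then follows, since a surface group is one-ended, so that $\Gamma$ has a one-ended subgroup of infinite index when the JSJ is non-trivial (the surface subgroup we produce then having infinite index) and is itself a surface group when the JSJ is trivial. Indeed, if the JSJ decomposition of $\Gamma$ is trivial then, by the Remark above, $\Gamma$ is a surface group and there is nothing more to do; so I would assume the JSJ is non-trivial and realise it by a reduced graph of spaces $X$ whose surface vertex spaces are compact surfaces with boundary, whose cyclic vertex spaces are circles, and whose edge spaces are circles attached by covering maps.

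First I would note that $X$ has a surface vertex space $\Sigma_{0}$, necessarily with $\chi(\Sigma_{0})\le-1$: if every vertex group of the JSJ were cyclic then $\Gamma$ would be a generalised Baumslag--Solitar group, and being one-ended it would then contain a Baumslag--Solitar subgroup, contradicting hyperbolicity.

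The heart of the proof is to construct a closed surface $S$ as a connected pre-cover of $X$ built only from the vertex space $\Sigma_{0}$. For each cyclic vertex $c$, let $D_{c}$ be the least common multiple of the degrees with which the edge spaces incident to $c$ cover $C_{c}$. Using the flexibility of finite covers of a surface with boundary --- which realise essentially arbitrary behaviour of the boundary curves, subject only to mild conditions that are met by taking the covering degree sufficiently divisible --- I would choose a connected finite cover $\widehat{\Sigma}_{0}\to\Sigma_{0}$ in which every boundary component lying over the boundary curve carried by an edge $e$ into $c$ has degree $D_{c}/\deg(e)$; then all such boundary components represent the single conjugacy class $t_{c}^{D_{c}}$, and one arranges an even number of them at each $c$. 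I would then pair up these boundary components at each cyclic vertex and join each pair by an edge cylinder running through a degree-$D_{c}$ cover of $C_{c}$; since every such cylinder and circle is a genuine elevation of an attaching map of $X$, the space $S$ so obtained is a connected pre-cover of $X$, and since it results from $\widehat{\Sigma}_{0}$ by gluing its boundary circles together in pairs it is a closed surface, with $\chi(S)=\deg(\widehat{\Sigma}_{0}\to\Sigma_{0})\cdot\chi(\Sigma_{0})<0$. By the lemma that a connected pre-cover induces a monomorphism on fundamental groups, $\pi_{1}S$ is a surface subgroup of $\Gamma$; and since the JSJ is non-trivial $\Gamma$ is not a surface group, so $X$ is not a closed surface, whence $S$ --- which is one --- cannot be a cover of $X$ but only a proper pre-cover, and proper pre-covers of the reduced graph of spaces $X$ have infinite index.

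The step I expect to be the main obstacle is the construction of $\widehat{\Sigma}_{0}$ realising the prescribed boundary behaviour simultaneously at every cyclic vertex, together with the verification that the ensuing gluing data really is a system of elevations, so that the covering theory of Section~\ref{s: Covering theory} applies: this amounts to coordinating one covering degree of $\widehat{\Sigma}_{0}$ against the divisibility and parity constraints coming from all of the cyclic vertices at once.
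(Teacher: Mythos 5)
Your argument is correct in outline but takes a genuinely different route from the paper. The paper invokes Wise's Theorem 4.18 to pass to a finite-index subgroup $\Gamma_0$ in which every attaching map matches a cyclic vertex group with a boundary component of a surface vertex, realises $\Gamma_0$ as the fundamental group of a compact aspherical $3$-manifold $M$ (thickening surfaces to $\Sigma\times I$ and circles to solid tori), and extracts the surface subgroup as $\pi_1$ of a component of the incompressible boundary $\partial M$ via the Loop Theorem and Dehn's Lemma. You instead build a closed surface directly as a connected pre-cover assembled from a single surface vertex space, which has the virtue of staying entirely within the covering theory of Section \ref{s: Covering theory} (it is closer in spirit to the proof of Theorem \ref{t: Main theorem}) and of avoiding $3$-manifold topology; the cost is that the step you flag really is the crux and plays exactly the role that Wise's theorem plays in the paper: you need a precise existence statement for connected finite covers of a compact surface with boundary realising prescribed boundary degrees, subject to the divisibility conditions you impose \emph{and} the parity constraint $b(\widehat{\Sigma}_0)\equiv\chi(\widehat{\Sigma}_0)\pmod 2$ forced by the classification of surfaces --- both are satisfied once the covering degree is even and divisible by each $2D_c$, but this needs to be said and the realisation result needs a proof or reference. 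Two smaller points to tighten: the degree bookkeeping should be checked so that each boundary circle of $\widehat{\Sigma}_0$ carries exactly one elevation, attached homeomorphically, of exactly one edge map (the paper allows edge groups to be \emph{powers of} boundary subgroups, which complicates this slightly); and the final claim that $S$ cannot be a cover of $X$ is better argued group-theoretically --- if it were, $\Gamma$ would be virtually, hence (being torsion-free) actually, a closed surface group, contradicting the non-triviality of its JSJ decomposition --- rather than by comparing the topological types of $S$ and $X$, since a priori a finite cover of a non-surface complex could still be homotopy equivalent to a surface.
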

\begin{proof}
By \cite[Theorem 4.18]{wise_subgroup_2000}, we can pass to a finite-index subgroup $\Gamma_0$ of $\Gamma$ and assume that every attaching map identifies a cyclic vertex group with a boundary component of a surface component.  It is easy to thicken this picture to see that $\Gamma_0$ is the fundamental group  of a 3-manifold with boundary, $M$.  Standard gluing arguments show that $M$ is aspherical, and it follows that $\chi(\partial M)\leq 0$.  By Stallings's Ends Theorem, $\Gamma_0$ is one-ended, and so $\partial M$ is incompressible in $M$ by the Loop Theorem and Dehn's Lemma.  Therefore $\pi_1(\partial M)$ is a surface subgroup of $\Gamma$.
\end{proof}

\begin{proof}[Proof of Theorem \ref{t: Main theorem}]
Realise the JSJ decomposition of $\Gamma$ by a graph of spaces $X$. By Lemma \ref{l: No rigid vertices}, we may assume that some vertex $v$ of $X$ is rigid, and not a thrice-punctured sphere.  That is, if $F=\pi_1(X_v)$ and $[\w]$ is the peripheral structure induced by the incident edge groups, then $(F,[\w])$ is rigid.  Let $\widehat{F}$ be a clean subgroup of finite index in $F$, equipped with the pullback peripheral structure $[\hat{\w}]$.    Because $\Gamma$ is subgroup separable \cite{wise_subgroup_2000}, there is a finite-sheeted covering space $\widehat{X}$ of $X$, with a vertex space $\widehat{X}_{\hat{v}}$ covering $X_v$, such that $\pi_1(\widehat{X}_{\hat{v}})=\widehat{F}$.  Let $\widehat{\Gamma}=\pi_1(\widehat{X})$ and let $\widehat{\Xi}$ be the underlying graph of $\widehat{X}$.  Let $X'$ be the pre-cover of $X$ that consists of the union of every vertex space of $\widehat{X}$ apart from $\widehat{X}_{\hat{v}}$, and  every edge cylinder of $\widehat{X}$ that does not adjoin $\widehat{X}_{\hat{v}}$.  Let $e_1,\ldots,e_m$ be the edges of $\widehat{\Xi}$ with one end adjoining $v$, and let $f_1,\ldots,f_n$ be the edges of $\Xi$ with both ends adjoining $v$.  We denote by $\partial^+_{e_i}$ the attaching map of $e_i$ that maps to $X_v$ and by $\partial^-_{e_i}$ the attaching map that maps to $X'$.   Both $\partial^+_{f_j}$ and $\partial^-_{f_j}$ map to $X_v$.

We will now define a new pre-cover $W$ of $\widehat{X}$ as follows.  Take $m$ copies of $\widehat{X}_{\hat{v}}$, denoted by $Y_i$, and take $2n$ further copies of $\widehat{X}_{\hat{v}}$, denoted by $Y^\pm_j$.  Consider the following subset $\mathcal{E}$ of the set of all elevations of edge maps of $X$ to $Y=\coprod_i Y_i\sqcup\coprod_j Y^+_j\sqcup\coprod_j Y^-_j$: $\mathcal{E}$ consists of all such elevations, except for the copy of $\partial^+_{e_i}$ that maps to $Y_i$, the copy of $\partial^+_{f_j}$ that maps to $Y^+_j$ and the copy of $\partial^-_{f_j}$ that maps to $Y^-_j$.  Each attaching map incident at $v$ has exactly $m+2n-1$ elevations in $\mathcal{E}$, and each of these elevations is a lift (meaning that it is of maximal possible degree).

Now take $m+2n-1$ copies of $X'$, denoted denote by $Z_k$, and let $Z=\coprod_k Z_k$.  The complete set of elevations of edge maps of $\widehat{X}$ to $Z$, which we denote by $\mathcal{F}$, consists of exactly $m+2n-1$ elevations of each $\partial^-_{e_i}$, and again each of these elevations is a lift.  Therefore, $Y\sqcup Z$, together with the elevations $\mathcal{E}\cup \mathcal{F}$, satisfies the hypotheses of Proposition \ref{p: Gluing precovers}, and can be completed to a pre-cover $W$ of $\widehat{X}$.  If $W$ is disconnected, replace it with one of its connected components.  Each vertex group of $W$ is freely indecomposable relative to its incident edge groups, so $\pi_1(W)$ is one-ended.  The $Y_i$ have hanging elevations of edge groups of $\widehat{X}$, so $W$ is not a cover of $\widehat{X}$.  Therefore $\pi_1(W)$ is a subgroup of infinite index in $\widehat{\Gamma}$, and hence in $\Gamma$ as required.
\end{proof}

Finally, we prove Corollary \ref{c: Minimal commensurability classes}.  One possible proof is identical to the proof of Theorem  \ref{t: Main theorem}, but uses a relative version of the JSJ decomposition, such as that provided by \cite{cashen_splitting_2010}.  The proof given here deduces Corollary \ref{c: Minimal commensurability classes} directly from Theorem \ref{t: Main theorem} by considering doubles.

Given a pair $(F,[\w])$, the corresponding \emph{double} $D(F,[\w])$ is the fundamental group of a graph of groups with two vertices and $\#[\w]$ edges; each vertex is labelled by a copy of $F$, each edge group is an element of $[\w]$, and the attaching maps are the natural inclusions.  Theorem \ref{t: Variant of Shenitzer's Theorem} implies that the pair $(F,[\w])$ is one-ended if and only if the double $D(F,[\w])$ is one-ended.  Clearly, $D(F,[\w])$ is a surface group if and only the pair $(F,[\w])$ can be realised as $(\pi_1\Sigma,\partial\Sigma)$ for some compact surface $\Sigma$.

\begin{proof}[Proof of Corollary \ref{c: Minimal commensurability classes}]
It is clear that a pair of the form $(\pi_1\Sigma,\partial\Sigma)$ is minimal.  Conversely, suppose that $(F,[\w])$ cannot be realised by a surface.  Then $D(F,[\w])$ is not a surface group and so, by Theorem \ref{t: Main theorem}, has a finitely generated, one-ended subgroup $H$ of infinite index.  Let $X$ be a graph of spaces that realises the given decomposition of $D(F,[\w])$, and let $X^H$ be the covering space corresponding to $H$.  Because $H$ is finitely generated, there is a finite subgraph of spaces $X'\subseteq X^H$ with fundamental group $H$; $X'$ is naturally a pre-cover of $X$.    By Theorem \ref{t: Variant of Shenitzer's Theorem}, each vertex group of $X'$, equipped with the induced peripheral structure, is one-ended.  If every vertex space $X'_{v'}$ of $X'$ finitely covered the corresponding vertex space $X_v$ of $X$, and if the induced peripheral structure on $\pi_1 X'_{v'}$ were always the full pullback of the peripheral structure on $\pi_1 X_v$, then it would follow from the covering theory of graphs of spaces that $X'\to X$ was a finite-sheeted covering map, which would contradict the fact that $H$ is of infinite index in $D(F,[\w])$.  Therefore, there is a vertex space $\pi_1X'_{v'}$ that is either of infinite index in $F$ or such that the peripheral structure $[\uu]$ on $\pi_1 X'_{v'}$ induced by $X'$ is strictly contained in the pullback peripheral structure $[\hat{\w}]$.  Therefore, $(\pi_1 X'_{v'},[\uu])<(F,[\w])$ and so $(F,[\w])$ is not minimal. 
\end{proof}

\bibliographystyle{plain}
\bibliography{oneend.bib}

\begin{thebibliography}{10}

\bibitem{alibegovic_combination_2005}
Emina Alibegovi\'c.
\newblock A combination theorem for relatively hyperbolic groups.
\newblock {\em The Bulletin of the London Mathematical Society},
  37(3):459--466, 2005.

\bibitem{bass_covering_1993}
Hyman Bass.
\newblock Covering theory for graphs of groups.
\newblock {\em J. Pure and Applied Algebra}, 89(1-2):3--47, 1993.

\bibitem{berge_documentation_????}
J.~Berge.
\newblock Documentation for the program {{H}eegaard.}
\newblock http://www.math.uic.edu/{\textasciitilde}t3m.

\bibitem{bestvina_questions_????}
Mladen Bestvina.
\newblock Questions in geometric group theory.
\newblock
  http://www.math.utah.edu/{\textasciitilde}bestvina/eprints/questions-updated%
.pdf.

\bibitem{bestvina_combination_1992}
Mladen Bestvina and Mark Feighn.
\newblock A combination theorem for negatively curved groups.
\newblock {\em Journal of Differential Geometry}, 35(1):85--101, 1992.

\bibitem{bowditch_cut_1998}
Brian Bowditch.
\newblock Cut points and canonical splittings of hyperbolic groups.
\newblock {\em Acta Math.}, 180(2):145--186, 1998.

\bibitem{bridson_problems_????}
Martin~R. Bridson.
\newblock Problems concerning hyperbolic and {{CAT(0)}} groups.
\newblock http://aimath.org/pggt.

\bibitem{bridson_metric_1999}
Martin~R. Bridson and Andr\'e Haefliger.
\newblock {\em Metric spaces of non-positive curvature}, volume 319 of {\em
  Grundlehren der Mathematischen Wissenschaften {[Fundamental} Principles of
  Mathematical Sciences]}.
\newblock {Springer-Verlag}, Berlin, 1999.

\bibitem{calegari_surface_2008}
Danny Calegari.
\newblock Surface subgroups from homology.
\newblock {\em Geometry \& Topology}, 12(4):1995--2007, 2008.

\bibitem{cashen_splitting_2010}
Christopher~H Cashen.
\newblock Splitting line patterns in free groups.
\newblock {\em arXiv:1009.2492v2}, September 2010.

\bibitem{cashen_line_2010}
Christopher~H. Cashen and Natasa Macura.
\newblock Line patterns in free groups.
\newblock {\em arXiv:1006.2123v4}, 2010.

\bibitem{dahmani_combination_2003}
Fran{\c c}ois Dahmani.
\newblock Combination of convergence groups.
\newblock {\em Geometry and Topology}, 7:933--963 (electronic), 2003.

\bibitem{diao_grushko_2005}
{Guo-An} Diao and Mark Feighn.
\newblock The {{G}rushko} decomposition of a finite graph of finite rank free
  groups: an algorithm.
\newblock {\em Geometry and Topology}, 9:1835--1880, 2005.

\bibitem{gordon_surface_2010}
Cameron Gordon and Henry Wilton.
\newblock On surface subgroups of doubles of free groups.
\newblock {\em Jour. London Math. Soc.}, 82(1):17--31, 2010.

\bibitem{gromov_hyperbolic_1987}
M.~Gromov.
\newblock Hyperbolic groups.
\newblock In {\em Essays in group theory}, volume~8 of {\em Math. Sci. Res.
  Inst. Publ.}, pages 75--263. Springer, New York, 1987.

\bibitem{guirardel_jsj_2009}
Vincent Guirardel and Gilbert Levitt.
\newblock {JSJ} decompositions: definitions, existence, uniqueness. {I}: The
  {JSJ} deformation space.
\newblock {\em arXiv:0911.3173v2}, November 2009.

\bibitem{haglund_special_2008}
Fr\'ed\'eric Haglund and Daniel~T. Wise.
\newblock Special cube complexes.
\newblock {\em Geometric and Functional Analysis}, 17(5):1551--1620, 2008.

\bibitem{tim_hsu_cubulating_2010}
Tim Hsu and Daniel~T. Wise.
\newblock Cubulating graphs of free groups with cyclic edge groups.
\newblock {\em American Journal of Mathematics}, 132(5), 2010.

\bibitem{hall_jr_subgroups_1949}
Marshall~Hall Jr.
\newblock Subgroups of finite index in free groups.
\newblock {\em Canadian Journal of Mathematics}, 1:187--190, 1949.

\bibitem{kahn_immersing_2009}
Jeremy Kahn and Vladimir Markovic.
\newblock Immersing almost geodesic surfaces in a closed hyperbolic three
  manifold.
\newblock {\em arXiv:0910.5501v3}, October 2009.

\bibitem{kharlampovich_irreducible_1998}
Olga Kharlampovich and Alexei Myasnikov.
\newblock Irreducible affine varieties over a free group. {{I}.}
  {{I}rreducibility} of quadratic equations and {{N}ullstellensatz}.
\newblock {\em Journal of Algebra}, 200(2):472--516, 1998.

\bibitem{kim_hyperbolic_2010}
Sang{-}hyun Kim and Sang{-}il Oum.
\newblock Hyperbolic surface subgroups of one-ended doubles of free groups.
\newblock {\em arXiv:1009.3820v2}, September 2010.

\bibitem{kim_polygonal_2009}
Sang{-}hyun Kim and Henry Wilton.
\newblock Polygonal words in free groups.
\newblock {\em Quarterly Journal of Mathematics}, October 2009.

\bibitem{louder_krull_2008}
Larsen Louder.
\newblock Krull dimension for limit groups {II:} aligning {JSJ} decompositions.
\newblock {\em arXiv:0805.1935v2}, May 2008.

\bibitem{lyndon_combinatorial_1977}
Roger~C. Lyndon and Paul~E. Schupp.
\newblock {\em Combinatorial group theory}.
\newblock {Springer-Verlag}, Berlin, 1977.
\newblock Ergebnisse der Mathematik und ihrer Grenzgebiete, Band 89.

\bibitem{manning_virtually_2010}
Jason~Fox Manning.
\newblock Virtually geometric words and {{W}hitehead's} algorithm.
\newblock {\em Math. Res. Lett.}, 17(5):917--925, 2010.

\bibitem{scott_finitely_1973}
G.~P. Scott.
\newblock Finitely generated 3-manifold groups are finitely presented.
\newblock {\em Journal of the London Mathematical Society. Second Series},
  6:437--440, 1973.

\bibitem{scott_topological_1979}
Peter Scott and Terry Wall.
\newblock Topological methods in group theory.
\newblock In {\em Homological group theory {(Proc.} Sympos., Durham, 1977)},
  volume~36 of {\em London Math. Soc. Lecture Note Ser.}, pages 137--203.
  Cambridge Univ. Press, Cambridge, 1979.

\bibitem{sela_diophantine_2001}
Z.~Sela.
\newblock Diophantine geometry over groups. {{I}.} {{M}akanin--{R}azborov}
  diagrams.
\newblock {\em Publications Math\'ematiques. Institut de Hautes {\'Etudes}
  Scientifiques}, 93:31--105, 2001.

\bibitem{shenitzer_decomposition_1955}
Abe Shenitzer.
\newblock Decomposition of a group with a single defining relation into a free
  product.
\newblock {\em Proceedings of the American Mathematical Society}, 6:273--279,
  1955.

\bibitem{wilton_elementarily_2007}
Henry Wilton.
\newblock Elementarily free groups are subgroup separable.
\newblock {\em Proceedings of the London Mathematical Society. Third Series},
  95(2):473--496, 2007.

\bibitem{wilton_halls_2008}
Henry Wilton.
\newblock Hall's {{T}heorem} for limit groups.
\newblock {\em Geometric and Functional Analysis}, 18(1):271--303, 2008.

\bibitem{wise_subgroup_2000}
Daniel~T. Wise.
\newblock Subgroup separability of graphs of free groups with cyclic edge
  groups.
\newblock {\em The Quarterly Journal of Mathematics}, 51(1):107--129, 2000.

\end{thebibliography}

\end{document}